\providecommand{\U}[1]{\protect\rule{.1in}{.1in}}
\newtheorem{theorem}{Theorem}
\theoremstyle{plain}
\newtheorem{definition}{Definition}
\newtheorem{proposition}{Proposition}
\numberwithin{equation}{section}
\begin{document}
\title[Modular operads and BV-geometry]{Modular operads and Batalin-Vilkovisky geometry\protect\footnote{
MPIM(Bonn) preprint 2006-48, (04/2006). IMRN, doi 10.1093/imrn/rnm075
}}
\author{Serguei
Barannikov}
\address{Ecole Normale Superieure, 45, rue d'Ulm 75230, Paris France }
\email{serguei.barannikov@ens.fr}
\keywords{Batalin-Vilkovisky geometry, operads, Deligne-Mumford moduli spaces. }
\begin{abstract}I describe the noncommutative Batalin-Vilkovisky geometry associated naturally with arbitrary modular operad. The classical limit of
this geometry is the noncommutative symplectic geometry of the corresponding tree-level cyclic operad. I show, in particular, that the algebras over the
Feynman transform of a twisted modular operad $\mathcal{P}$ are in one-to-one correspondence with solutions to quantum master equation of Batalin-Vilkovisky geometry
on the affine $\mathcal{P} -$manifolds. As an application I give a construction of characteristic classes with values in the homology
of the quotient of Deligne-Mumford moduli spaces. These classes are associated naturally with solutions to the quantum master equation on affine $\mathbb{S} [t] -$manifolds, where $\mathbb{S} [t]$ is the twisted modular $D e t -$operad constructed from symmetric groups, which generalizes the cyclic operad of associative algebras. 
\end{abstract}
\maketitle

\section{\bigskip Introduction.}
Modular operads, introduced in \cite{GK}, is a generalization
of cyclic operads intended to capture information about all orders of Feynman diagrammatics, the cyclic operads corresponding to the tree-level expansions.
In particular, the graph complexes introduced in \cite{Konts2} arise naturally
as the modular analog of the cobar transform, called Feynman transform in \cite{GK}.
The properties of the Feynman transform, compared to the cobar transform, are quite misterious and the calculations of homology of graph complexes constitutes
difficult combinatorial problems, the examples of which include the Vassiliev homology of the spaces of knots and the cohomology of uncompactified moduli
spaces of Riemann surfaces. 

In this note I show that the modular operads and their Feynman transforms are intimately related with
a kind of noncommutative Batalin-Vilkovisky geometry. The classical limit of this geometry is the noncommutative symplectic geometry described in \cite{kont},
\cite{Ginz} in connection with cyclic operads. 

I show,
in particular, that the algebras over the Feynman transform of an arbitrary twisted modular operad $\mathcal{P}$ are in one-to-one correspondence with solutions to quantum master equation of Batalin-Vilkovisky geometry
on affine $\mathcal{P} -$manifolds, see Theorem \ref{theorem1} from Section \ref{dgla}.

As an application I give a construction of characteristic classes with values in the homology of the quotient of Deligne-Mumford moduli
spaces. These classes are associated naturally with solutions to quantum master equation of Batalin-Vilkovisky geometry on the affine $\mathbb{S} [t] -$manifolds. The twisted $D e t -$operad $\mathbb{S} [t]$ is introduced in Section \ref{detst}. It is constructed naturally from symmetric groups and is a
generalization of the cyclic operad of associative algebras. One of the important properties of this twisted modular operad is the identification of the
stable ribbon graph complex introduced in \cite{konts3} with the Feynman
transform of $\mathbb{S} [t]$, see Theorem \ref{theorem2} in Section \ref{sectionfeynmnst}.

Here is the brief content of the sections. Sections 1-3 are introductory, sections 4-7 are devoted to the characterisation of algebras
over Feynman transform and the description of the corresponding Batalin-Vilkovisky geometry, in sections 8-11 I introduce the modular twisted $D e t -$ operad $\mathbb{S} [t]$ and prove the theorem relating the Feynman transform of $\mathbb{S} [t]$ with cell complexes of the compactified moduli spaces of Riemann surfaces. 

This work is a part of a project started
in the summer of 2000. The paper was written during my visits to Research Institut for Mathematical Sciences in Kyoto in the autumn 2003 and to Max Planck
Institut for Mathematics in Bonn in the winter 2005/2006. I'm grateful to both institutions for support and excelent working conditions. It is a pleasure
to aknowledge the stimulating discussions with Yu.I.Manin, S.Merkulov and K.Saito. I would like also to thank the referee for carefull reading of the paper.

Notations: I denote by $k$ a field of characteristic zero, if $V = \oplus _{i}V_{i}$ is a graded vector space over $k$ then $V [i]$ denotes the vector space with graded components $V [i]_{j} =V_{i +j}$, if $x \in V_{i}$ then $\bar{x} =i\ensuremath{\operatorname*{mod}}2$ denotes its degree modulo $\mathbb{Z}/2 \mathbb{Z}$, the cardinality of a finite set $I$ is denoted by $\vert I\vert $. Throughout the paper I work in the tensor symmetric category of $\mathbb{Z} -$graded vector spaces with the isomorphism $X \bigotimes Y \simeq Y \bigotimes X$
\begin{equation}x \otimes y \rightarrow ( -1)^{\bar{x} \bar{y}} y \otimes x \label{tensprod}
\end{equation}I denote via $V^{d u a l}$ the linear dual of $V$ with ($V^{d u a l})_{i} =(V_{ -i})^{d u a l}$. For a module $U$ over a finite group $G$ I denote via $U_{G}$ the $k -$vector space of coinvariants, i.e. the quotient of $U$ by submodule generated by $\{g u -u\vert u \in U ,g \in G\}$, and via $U^{G}$ the subspace of invariants: $\{ \forall g \in G :g u =u\vert u \in U\}$. For a finite set $\{V_{i}\vert i \in I\}$ of $k -$vector spaces labeled by elements of the finite set $I$ I define the tensor product \begin{equation*}\underset{i \in I}{\bigotimes }V_{i} =\left (\underset{b i j e c t i o n s :I \leftrightarrow \{1 ,\ldots  ,l\}}{\bigoplus }V_{f^{ -1} (1)} \otimes \ldots  \otimes V_{f^{ -1} (l)}\right )_{\mathbb{S}_{l}}\text{.}
\end{equation*} 

\section{Modular operads.}
In this Section I collect for reader convenience the definitions relative to the concepts of the modular operad and of the Feynman transformation
of modular operad. The material presented in this Section is borrowed from Sections 2-5 of \cite{GK}.

An $\mathbb{S}$-module $\mathcal{P}$ is a collection of chain complexes of $k -$vector spaces $\mathcal{P} ((n))$, $n \in \mathbb{N}$, equipped with an action of $\mathbb{S}_{n}$, the group of automorphisms of the set $\{1 ,\ldots  ,n\}$. Given an $\mathbb{S}$-module $\mathcal{P}$ and a finite set $I$ we extend $\mathcal{P}$ to the functor on finite sets by putting \begin{equation*}\mathcal{P} ((I)) =\left (\underset{b i j e c t i o n s :I \leftrightarrow \{1 ,\ldots  ,n\}}{\bigoplus }\mathcal{P} ((n))\right )_{\mathbb{S}_{n}}\text{.}
\end{equation*} 

A graph $G$ is a triple $(F l a g (G) ,\lambda  ,\sigma )$, where $F l a g (G)$ is a finite set, whose elements are called flags, $\lambda $ is a partition of $F l a g (G)$, and $\sigma $ is an involution acting on $F l a g (G)$. By partition here one understands a disjoint decomposition into unordered subsets, called blocks. The vertices of the graph are the
blocks of the partition. The set of vertices is denoted by $V e r t (G)$. The subset of $F l a g (G)$ corresponding to vertex $v$ is denoted by $L e g (v)$. The cardinality of $L e g (v)$ is called the valence of $v$ and is denoted $n (v)$. The edges of the graph are the pairs of flags forming a non-trvial two-cycle of the involution $\sigma $. The set of edges is denoted $E d g e (G)$. The legs of the graph are the fixed elements of the involution $\sigma $. The set of legs is denoted $L e g (G)$. The number of legs is denoted $n (G)$. I denote by $\vert G\vert $ the one-dimensional CW-complex which is the geometric realisation of the graph
$G$. 

A stable graph $G$ is a connected graph with a non-negative integer number $b (v)$ assigned to each vertex $v \in V e r t (G)$, such that $2 b (v) +n (v) -2 >0$ for any $v \in V e r t (G)$. For a stable graph $G$ I put \begin{equation*}b (G) =\Sigma _{v \in V e r t (G)} b (v) +\dim  H_{1} (\vert G\vert )\text{.}
\end{equation*} 

A stable $\mathbb{S}$-module $\mathcal{P}$ is an $\mathbb{S}$-module with extra grading by non-negative integers on each $\mathbb{S}_{n}$-module : $\mathcal{P} ((n)) = \oplus _{b \geq 0}\mathcal{P} ((n ,b))$, such that if $2 b +n -2 \leq 0$ then $\mathcal{P} ((n ,b)) =0$. I assume throughout the paper that the chain complexes $\mathcal{P} ((n ,b))$ have finite-dimensional homology. In the operad framework $\mathcal{P} ((n))$ can be thought of as the space of all possibilities to get an $n -$tensor using $\mathcal{P}$-operations. 

Given a stable $\mathbb{S}$-module $\mathcal{P}$ and a stable graph $G$ one defines \begin{equation*}\mathcal{P} ((G)) =\underset{v \in V e r t (G)}{\bigotimes }\mathcal{P} ((L e g (v) ,b (v)))\text{.}
\end{equation*} 

Let us denote by $\Gamma  ((n ,b))$ the set consisting of all pairs $(G ,\rho )$ where $G$ is a stable graph with $n (G) =n$ and $b (G) =b$ and $\rho $ is a bijection $L e g (G) \leftrightarrow \{1 ,\ldots  ,n\}$. Sometimes I shall omit from the notation for an element of $\Gamma  ((n ,b))$ the marking $\rho $ when this does not lead to a confusion. A modular operad $\mathcal{P}$ is a stable $\mathbb{S}$-module $\mathcal{P}$ together with composition maps
\begin{equation}\mu _{(G ,\rho )}^{\mathcal{P}} :\mathcal{P} ((G)) \rightarrow \mathcal{P} ((n ,b)) \label{mugamma}
\end{equation}defined for any stable graph with marked legs $(G ,\rho ) \in $ $\Gamma  ((n ,b))$ and all possible $n$ and $b$. These maps must be $\mathbb{S}_{n} -$equivariant with respect to relabeling of legs of $G$ and satisfy the natural associativity condition with respect to the compositions in the category of stable graphs (see loc.cit.
Sections 2.13-2.21). 

Given a finite set $I$ let us denote by $\Gamma  ((I ,b))$ the set consisting of stable graphs $G$ with $b (G) =b$ and with exterior legs marked by the elements of $I$. Using a bijection $I \leftrightarrow \{1 ,\ldots  ,n\}$ I extend the composition (\ref{mugamma}) to the map \begin{equation*}\mu _{G}^{\mathcal{P}} :\mathcal{P} ((G)) \rightarrow \mathcal{P} ((I ,b))
\end{equation*} defined for $G \in \Gamma  ((I ,b))$. Because of $\mathbb{S}_{n} -$equivariance this does not depend on the choice of the bijection.The associativity condition satisfied by
the compositions $\mu _{G}^{\mathcal{P}}$ can be described as follows. For any subset of edges $J \subseteq E d g e (G)$ one has the stable graph $G/J$ and naturally defined morphism of stable graphs $f :G \rightarrow G/J$. For such morphism one defines the natural map \begin{equation*}\mu _{G \rightarrow G/J}^{\mathcal{P}} :\mathcal{P} ((G)) \rightarrow \mathcal{P} ((G/J)) ,\text{\thinspace \thinspace \thinspace \thinspace }\mu _{G \rightarrow G/J}^{\mathcal{P}} =\underset{v \in V e r t (G/J)}{\bigotimes }\mu _{f^{ -1} (v)}^{\mathcal{P}}
\end{equation*} and the associativity condition tells that \begin{equation*}\mu _{G}^{\mathcal{P}} =\mu _{G/J}^{\mathcal{P}} \circ \mu _{G \rightarrow G/J}^{\mathcal{P}}
\end{equation*} 

It follows from the asssociativity condition, that it is sufficient to define the compositions \ref{mugamma}
just for the stable graphs with one edge. There are two types of such graphs. The first one, which I denote $G_{(I_{1} ,I_{2} ,b_{1} ,b_{2})}$, has two vertices so that the set of legs is decomposed into two subsets $I_{1} \sqcup I_{2} =\{1 ,\ldots  ,n\}$. The composition along $G_{(I_{1} ,I_{2} ,b_{1} ,b_{2})}$ has the form
\begin{equation}\mu _{G_{(I_{1} ,I_{2} ,b_{1} ,b_{2})}}^{\mathcal{P}} :\mathcal{P} ((I_{1} \sqcup \{f\} ,b_{1})) \otimes \mathcal{P} ((I_{2} \sqcup \{f^{ \prime }\} ,b_{2})) \rightarrow \mathcal{P} ((n ,b_{1} +b_{2})) \label{phiij}
\end{equation}where $f ,f^{ \prime }$ are the two flags corresponding to the edge joining the two vertices. Actually, since the symmetric group $\mathbb{S}_{n}$, acting on $\mathcal{P} ((n ,b_{1} +b_{2}))\text{,}$ acts transitevely on the set of pairs $I_{1}$, $I_{2}$ in (\ref{phiij}) with fixed cardinality, it is sufficient to consider the composition (\ref{phiij})
just for the subsets $I_{1} =\{1 ,\ldots  ,m -1\}$, $I_{2} =\{m ,\ldots  ,n\}$. The second type of the stable graphs with one edge, which I denote by $G_{n ,b}$, has one vertice and its single edge is a loop. The composition along $G_{n ,b}$ is
\begin{equation}\mu _{G_{n ,b}}^{\mathcal{P}} :\mathcal{P} ((\{1 ,\ldots  n\} \sqcup \{f ,f^{ \prime }\} ,b -1)) \rightarrow \mathcal{P} ((n ,b)) \label{dzetaij}
\end{equation}where $f$ and $f^{ \prime }$ are the flags corresponding to the unique edge. 

An example of modular operad is the endomorphism operad of a
chain complex of $k -$vector spaces $V = \oplus _{i}V_{i} [ -i]$ equipped with symmetric pairing $B$ of degree $0$, \begin{equation*}B (u ,v) =( -1)^{\bar{u}\; \bar{v}} B (v ,u) ,\text{\thinspace \thinspace \thinspace \thinspace }B :V^{ \otimes 2} \rightarrow k
\end{equation*} so that $B (u ,v) =0$ unless $\deg u +\deg  v =0$. The $\mathbb{S} -$module underlying the endomorphism modular operad of $V$ is defined by
\begin{equation}\mathcal{E} [V] ((n ,b)) =V^{ \otimes n} \label{evnb}
\end{equation}with the standard $\mathbb{S}_{n} -$action. Then \begin{equation*}\mathcal{E} [V] ((G)) =V^{ \otimes F l a g (G)}
\end{equation*} The composition (\ref{mugamma}) is the contraction with $B^{ \otimes E d g e (G)}$. Because $\deg B =0$, this is compatible with the definition of the usual endomorphisms whose components are defined by $H o m_{k} (V^{ \otimes n -1} ,V)$: the isomorphism induced by $B$: $V$ $ \simeq V^{d u a l}$ gives the isomorphisms of the underlying operad\begin{equation*}\mathcal{E} [V] ((n ,b)) \simeq H o m_{k} (V^{ \otimes n -1} ,V)\text{.}
\end{equation*} 

Another series of examples is given by cyclic operads with $\mathcal{P} ((m)) =0$ for $m =1 ,2$, which can be considered as modular operads by putting $\mathcal{P} ((m ,b)) =0$ for $b \geq 1$. 

\bigskip The image of a modular operad under Feynman transform is some modification
of modular operad with extra signs involved. To take into account these signs one needs to introduce the twisting of modular operads. The twisting is also
unavoidable when one wishes to associate an endomorphism modular operad with chain complex with symmetric or antisymmetric inner products of arbitrary degree.

\subsection{\bigskip Determinants}
To simplify the signs bookkeeping it is convenient to introduce for a finite dimensional $k -$vector space $V$ the determinant \begin{equation*}D e t (V) =\Lambda ^{\dim V} (V) [\dim V]\text{.}
\end{equation*} This is the top-dimensional exterior power of the $k -$vector space $V$ concentrated in degree $( -\dim  V)$. I shall mostly need the determinant of the vector space $k^{S}$ associated with a finite set $S$. I denote it $D e t (S)$: \begin{equation*}D e t (S) =D e t (k^{S})\text{.}
\end{equation*} Because of (\ref{tensprod}) one has the natural isomorphism for the
disjoint union of sets $ \sqcup _{i \in I}S_{i}$
\begin{equation}D e t (\coprod _{i \in I}S_{i}) \simeq \underset{i \in I}{\bigotimes }D e t (S_{i}) . \label{detsi}
\end{equation}Another obvious property is $D e t^{ \otimes 2} (S) \simeq k [2 \vert S\vert ]$. I shall also put for a graded finite dimensional $k -$vector space $V_{ \ast }$ \begin{equation*}D e t (V_{ \ast }) =\underset{j \in \mathbb{Z}}{\bigotimes }D e t (V_{j})^{(( -1)^{j\ensuremath{\operatorname*{mod}}2})}\text{.}
\end{equation*} 

\subsection{\bigskip Cocycles.}
I will only consider cocycles with values in the Picard tensor symmetric category of invertible graded $k -$vector spaces. Such a cocycle $\mathcal{D}$ is a functor which assigns to a stable graph $G$ a graded one-dimensional vector space $\mathcal{D} (G)$ and to any morphism of stable graphs $f :G \rightarrow G/J$ the linear isomorphism \begin{equation*}\nu _{f} :\mathcal{D} (G/J) \otimes \underset{v \in V e r t (G/J)}{\bigotimes } \mathcal{D} (f^{ -1} (v)) \rightarrow \mathcal{D} (G)
\end{equation*} satisfying the natural associativity condition with respect to the composition of two morphisms, see loc.cit. Section
4.1. For the graph with only one vertice and no edges $G = \ast _{n ,b}$I must have $\mathcal{D} ( \ast _{n ,b}) =k$. Examples of such cocycles are
\begin{equation}\mathcal{K} (G) =D e t (E d g e (G)) \label{kG}
\end{equation}\begin{equation*}\mathcal{L} (G) =D e t (F l a g (G)) D e t^{ -1} (L e g (G))\text{.}
\end{equation*} 

\subsection{\bigskip Twisted modular operads.}
\bigskip A twisted modular $\mathcal{D} -$operad $\mathcal{P}$ is a stable $\mathbb{S}$-module $\mathcal{P}$, $\mathcal{P} ((n)) = \oplus _{b}$ $\mathcal{P} ((n ,b))$, together with composition maps
\begin{equation}\mu _{(G ,\rho )}^{\mathcal{P}} :\mathcal{D} (G) \otimes \mathcal{P} ((G)) \rightarrow \mathcal{P} ((n ,b)) \label{mugammaK}
\end{equation}defined for $(G ,\rho ) \in \Gamma  ((n ,b))$ which should satisfy the $\mathbb{S}_{n} -$equivariance and the associativity conditions parallel to that of modular operad. 

\subsection{\bigskip Coboundaries.}
Let $s$ be a stable $\mathbb{S}$-module such that $\dim _{k} s ((g ,n)) =1$ for all $g ,n$. Then $s$ defines a cocycle
\begin{equation}\mathcal{D}_{s} (G) =s ((n ,b)) \otimes \underset{v \in V e r t (G)}{\bigotimes } s^{ -1} ((n (v) ,b (v))) . \label{DsG}
\end{equation}This is called the coboundary of $s$. Tensoring underlying $\mathbb{S}$-modules by $s$ defines equivalence of the category of modular $\mathcal{D} -$operad with the category of modular $\mathcal{D} \otimes \mathcal{D}_{s} -$operad. Examples of such coboundaries are \begin{equation*}\Sigma  ((n ,b)) =k [ -1]
\end{equation*}\begin{equation*}\alpha  ((n ,b)) =k [n]
\end{equation*}\begin{equation*}\beta  ((n ,b)) =k [b -1]
\end{equation*}\begin{equation*}\tilde{\mathfrak{s}} =s g n_{n} [n]
\end{equation*} in the first three examples the $\mathbb{S}_{n} -$action is trivial, and in the last example it is the alternating representation. 

\subsection{Free modular operads.}
The forgetful functor \begin{equation*}\;\text{\emph{modular operads}}\; \rightarrow \;\text{\emph{stable}}\;\;\mathbb{S} -\;\text{\emph{modules}}\;
\end{equation*}
has the left adjoint functor which associates to a stable $\mathbb{S} -$module $\mathcal{A}$ the free modular operad $\mathbb{M} \mathcal{A}$ generated by $\mathcal{A}$:\begin{equation*}\mathbb{M} \mathcal{A} ((n ,b)) =\underset{G \in [\Gamma ((n ,b))]}{\bigoplus }\mathcal{A} ((G))_{A u t (G)}
\end{equation*} where $[\Gamma ((n ,b))]$ denotes the set of isomorphisms classes of pairs $(G ,\rho )$ where $G$ is a stable graph with $n (G) =n$, $b (G) =b$ and $\rho $ is a bijection $L e g (G) \leftrightarrow \{1 ,\ldots  n\}$. 

Similarly one defines the free modular twisted $\mathcal{D} -$operad $\mathbb{M}_{\mathcal{D}} \mathcal{A}$ generated by stable $\mathbb{S} -$module $\mathcal{A}$: \begin{equation*}\mathbb{M}_{\mathcal{D}} \mathcal{A} ((n ,b)) =\underset{G \in [\Gamma ((n ,b))]}{\bigoplus }(\mathcal{D} (G) \otimes \mathcal{A} ((G)))_{A u t (G)}\text{.}
\end{equation*} On the subspace of generators the composition map $\mu _{G}$ is simply the projection $\mathcal{D} (G) \otimes \mathcal{A} ((G)) \rightarrow (\mathcal{D} (G) \otimes \mathcal{A} ((G)))_{A u t (G)}\text{.}$ 

\subsection{Feynman transform.}
Let $\mathcal{P}$ be a modular $\mathcal{D} -$operad. I assume for simplicity below that all spaces $\mathcal{P} ((n ,b))$ are finite-dimensional in each degree. One can avoid this assumption in the standard way by introducing the modular cooperads, see
\cite{GJ}, I leave details to an interested reader. In our examples below
the spaces $\mathcal{P} ((n ,b))$ are finite-dimensional in each degree. Let us put $\mathcal{D}^{ \vee } =\mathcal{K} \mathcal{D}^{ -1}$, where $\mathcal{K}$ is the cocycle (\ref{kG}). The Feynman transform of a modular $\mathcal{D} -$operad $\mathcal{P}$ is a modular $\mathcal{D}^{ \vee } -$operad $\mathcal{F}_{\mathcal{D}} \mathcal{P}$, defined in the following way. As a stable $\mathbb{S} -$module, forgetting the differential, $\mathcal{F}_{\mathcal{D}} \mathcal{P}$ is the free modular $\mathcal{D}^{ \vee } -$operad generated by stable $\mathbb{S} -$module $\{\mathcal{P} ((n ,b))^{d u a l}\}$. The differential on $\mathcal{F}_{\mathcal{D}} \mathcal{P}$ is the sum $d_{\mathcal{F}} = \partial _{\mathcal{P}^{d u a l}} + \partial _{\mu }$ of the differential $ \partial _{\mathcal{P}^{d u a l}}$ induced on $\mathbb{M}_{\mathcal{D}^{ \vee }} \mathcal{P}^{d u a l}$ by the differential on $\mathcal{P}\,$and of the differential $ \partial _{\mu }$, whose value on the term $(\mathcal{D}^{ \vee } (G) \otimes \mathcal{P}^{d u a l} ((G)))_{A u t (G)}$ is a sum over all equivalence classes of stable graphs $\tilde{G}$ such that $\tilde{G}/\{e\} \simeq G$ of the map dual to the composition $\mu _{\tilde{G} \rightarrow G}^{\mathcal{P}}$ multiplied by the element $e [1] \in D e t (\{e\})$:\begin{equation*} \partial _{\mu }\vert _{(\mathcal{D}^{ \vee } (G) \otimes \mathcal{P}^{d u a l} ((G)))_{A u t (G)}} =\sum _{\tilde{G}/\{e\} \simeq G}e [1] \otimes (\mu _{\tilde{G} \rightarrow G}^{\mathcal{P}})^{d u a l}
\end{equation*} see Section~5 of loc.cit. 

The Feynman transform is a generalization of graph
complexes from \cite{kont}. The Lie, commutative and associative graph complexes
correspond to the $n (G) =0$ part of the Feynman transforms of the corresponding cyclic operads, considered as modular operads with $\mathcal{P} ((n ,b)) =0$ for $b \geq 1$. 

\section{Endomorphisms operad with inner products of degree $l \in \mathbb{Z}$.}
Here I discuss the natural twisted modular operads of endomorphisms associated with symmetric or antisymmetric inner products of degree $l \in \mathbb{Z}$. This is a relatively straightforward extension of the degree zero symmetric and degree $ -1$ antisymmetric cases described in \cite{GK},
subsections 2.25 and 4.12. 

\subsection{\bigskip Symmetric inner product of degree $l \in \mathbb{Z}$.}
Let $V$ be a chain complex with symmetric inner product $B$ of arbitrary degree, $\deg B =l$, $l \in \mathbb{Z}$:\begin{equation*}B (u ,v) =( -1)^{\bar{u}\; \bar{v}} B (v ,u) ,\text{\thinspace \thinspace \thinspace \thinspace }B :V^{ \otimes 2} \rightarrow k [ -l] ,\text{\thinspace \thinspace }l \in \mathbb{Z}
\end{equation*} so that $B (u ,v) =0$ unless $\deg u +\deg  v =l$. If I put for underlying $\mathbb{S}$-modules \begin{equation*}\mathcal{E} [V] ((n ,b)) =V^{ \otimes n}
\end{equation*} then the contraction with $B^{E d g e (G)}$defines naturally the composition map \begin{equation*}\mu _{G}^{\mathcal{E} [V]} :\mathcal{K}^{ \otimes l} (G) \otimes \mathcal{E} [V] ((G)) \rightarrow \mathcal{E} [V] ((n ,b))
\end{equation*} of the modular $\mathcal{K}^{ \otimes l} -$operad where \begin{equation*}\mathcal{K}^{ \otimes l} (G) =D e t^{ \otimes l} (E d g e (G))
\end{equation*} Indeed, for even $l$, $l =2 l^{ \prime }$, the tensoring by the cocycle acts simply as the degree shift \begin{equation*}\mathcal{K}^{ \otimes 2 l^{ \prime }} (G) =k[2 l^{ \prime }\vert E d g e(G)\vert ]
\end{equation*} and the contraction with $B^{E d g e (G)}$ acting on $V^{F l a g (G)}$ decreases the total degree exactly by $2 l^{ \prime } \vert E d g e (G)\vert $. For odd $l$, $l =2 l^{ \prime } +1$, the cocycle is the degree shift tensored by the top exterior power of $k^{E d g e (G)}$ \begin{equation*}\mathcal{K}^{ \otimes 2 l^{ \prime } +1} (G) =\Lambda ^{\vert E d g e (G)\vert } (k^{E d g e (G)})[(2 l^{ \prime } +1)\vert E d g e (G)\vert ]\text{.}
\end{equation*} Notice that the permutation of any two edges inverses the sign of the value of $B^{ \otimes E d g e (G)}$ on $V^{ \otimes F l a g (G)}$ since $B$ is of odd degree. This explains the necessity for the term $\Lambda ^{\vert E d g e (G)\vert } (k^{\vert E d g e (G)\vert })$ in the case of odd degree. 

\subsection{Antisymmetric inner product of degree $l \in \mathbb{Z}$}
Let $V$ be a chain complex with \emph{antisymmetric} inner product \begin{equation*}B (u ,v) = -( -1)^{\bar{u}\; \bar{v}} B (v ,u) ,\;\deg B =l ,l \in \mathbb{Z}\text{.}
\end{equation*} The cocycle corresponding to such inner product is \begin{equation*}\mathcal{K}^{ \otimes l -2} \mathcal{L} (G) =D e t^{ \otimes l -2} (E d g e (G)) D e t (F l a g (G)) D e t^{ -1} (L e g (G))\text{.}
\end{equation*} Using (\ref{detsi}) I see that \begin{equation*}\mathcal{K}^{ \otimes l -2} \mathcal{L} (G) =\mathcal{K}^{ \otimes l} \otimes (\underset{e \in E d g e (G)}{\bigotimes }\Lambda ^{2} (k^{\{s_{e} ,t_{e}\}}))
\end{equation*} where $\{s_{e} ,t_{e}\}$ is the set of two flags corresponding to the edge $e$. I put now $\mathcal{E} [V] ((n ,b)) =V^{ \otimes n}$ and define the composition map (\ref{mugammaK}) as in the previous cases: identify $\mathcal{E} [V] ((G))$ with $V^{F l a g (G)}$ and contract with $B^{E d g e (G)}$. The composition map is well defined since permutation of two flags $s_{e} ,t_{e}$ reverses the sign of $\Lambda ^{2} (k^{\{s_{e} ,t_{e}\}})$.

\begin{definition}
Twisted modular $\mathcal{P}$-algebra structure on chain complex $V$ with symmetric (respectively antisymmetric) inner product of degree $l$ is a morphism of twisted modular $\mathcal{K}^{ \otimes l} -$operads (respectively $\mathcal{K}^{ \otimes l -2} \mathcal{L} -$operads): $\mathcal{P} \rightarrow \mathcal{E} [V]$. 
\end{definition}

\subsection{Suspension.}
The suspension coboundary $\mathcal{D}_{\mathfrak{s}}$, defined in \cite{GK}, subsection 4.4, is associated
with the $\mathbb{S} -$module \begin{equation*}\mathfrak{s} ((n ,b)) =s g n_{n} [2(b -1) +n]
\end{equation*} where $s g n_{n}$ is the standard alternating representation of $\mathbb{S}_{n}$. If I identify in the tensor product (\ref{DsG}) $s g n_{n (v)}[n(v)]$ with $D e t (L e g (v))$ then I get \begin{equation*}\mathcal{D}_{\mathfrak{s}} (G) =D e t (L e g (G)) [2(b(G) -1)]\underset{v \in V e r t (G)}{\bigotimes }D e t^{ -1} (L e g (v))[2(1 -b(v))]\text{.}
\end{equation*} This is equal to \begin{equation*}\mathcal{D}_{\mathfrak{s}} (G) =\mathcal{L}^{ -1}[2\vert E d g e(G)\vert ] =\mathcal{L}^{ -1} \mathcal{K}^{ \otimes 2} (G)
\end{equation*} because of the formula \begin{equation*}\Sigma _{v \in V e r t (G)} (b (v) -1) =b (G) -1 -\vert E d g e (G)\vert 
\end{equation*} I see that the multiplication by $\mathfrak{s}$ transforms modular $\mathcal{K}^{ \otimes l -2} \mathcal{L}$-operads to modular $\mathcal{K}^{ \otimes l}$-operads and vice versa. In particular, for algebras over such operads the degrees of the corresponding inner products must be the same.
The coboundary associated with \begin{equation*}\tilde{\mathfrak{s}} =s g n_{n} [n]
\end{equation*} so that\begin{equation*}\mathcal{D}_{\tilde{\mathfrak{s}}} =\mathcal{L}^{ -1}
\end{equation*} is sometimes more useful in the situation of inner products of arbitrary degree. The multiplication by $\tilde{\mathfrak{s}}$ of the underlying $\mathbb{S} -$module transforms modular $\mathcal{K}^{ \otimes l -2} \mathcal{L}$-operad to modular $\mathcal{K}^{ \otimes l -2}$-operad and, since $\mathcal{L}^{2} \simeq \mathcal{K}^{ \otimes 4}$, it transforms modular $\mathcal{K}^{ \otimes l +2}$-operad to modular $\mathcal{K}^{ \otimes l -2} \mathcal{L}$-operad. If $V$ is a chain complex with symmetric (respectively antisymmetric) inner product $B$ of degree $\deg B =l$, then the suspension of $V$ is a chain complex $V [1]$ with the antisymmetric (respectively symmetric) inner product $\tilde{B}$ of degree $l -2$ defined by \begin{equation*}\tilde{B}(x[1] ,y [1]) =( -1)^{\bar{x}} B (x ,y)\text{.}
\end{equation*} The multiplication by $\tilde{\mathfrak{s}}$ of the modular $\mathcal{K}^{ \otimes l}$-operad $\mathcal{E} [V]$ gives the modular $\mathcal{K}^{ \otimes l -4} \mathcal{L}$-operad $\mathcal{E} [V [1]]$. It follows that if $\mathcal{P}$ is a $\mathcal{K}^{ \otimes l} -$operad and $V$ is a chain complex with symmetric inner product $B$ of degree $l$, then the modular $\mathcal{P} -$algebra structure on $V$ corresponds under the suspension to the modular $\tilde{\mathfrak{s}} \mathcal{P}$-algebra structure on the chain complex $V [1]$ equipped with the antisymmetric inner product $\tilde{B}$ of degree $l -2$. 

\subsection{Reducing the twistings.}
The twistings corresponding to the inner products can be reduced using coboundaries to just two cocycles: the trivial cocycle, if $l$ is even, and the cocycle $D e t$ whose value on a graph $G$ is \begin{equation*}D e t (G) =D e t (H_{1} (G))\text{,}
\end{equation*} if $l$ is odd. The last cocycle is isomorphic to \begin{equation*}D e t (G) \simeq \mathcal{K}^{ -1} \mathcal{D}_{\tilde{\mathfrak{s}}}^{ -1} \mathcal{D}_{\Sigma }^{ -1}
\end{equation*} see loc.cit. Proposition 4.14. Notice that both cocycles are trivial on trees. Possible choices of the coboundaries reducing
the twistings to the trivial or $D e t$ is given by the following identities :
\begin{gather*}(\mathcal{K}^{ \otimes 2 l}) \mathcal{D}_{\beta }^{ \otimes  -2 l} \simeq k ,\;(\mathcal{K}^{ \otimes 2 l} \mathcal{L}) \mathcal{D}_{\tilde{\mathfrak{s}}} \mathcal{D}_{\beta }^{ \otimes  -2 l} \simeq k\text{,} \\
(\mathcal{K}^{ \otimes 2 l -1}) \mathcal{D}_{\tilde{\mathfrak{s}}}^{ -1} \mathcal{D}_{\beta }^{ \otimes  -2 l} \mathcal{D}_{\Sigma }^{ -1} \simeq D e t (G)\text{\ \ }(\mathcal{K}^{ \otimes 2 l -1} \mathcal{L}) \mathcal{D}_{\beta }^{ \otimes  -2 l} \mathcal{D}_{\Sigma }^{ -1} \simeq D e t (G)\text{.}\end{gather*}

Let $\mathcal{P}$ be a cyclic operad. Putting $\mathcal{P} ((n ,b)) =0$ for $b >0$, the cyclic operad can be considered both as a modular operad and as a twisted modular $D e t -$operad. Tensoring the cyclic operad by the coboundaries as above one can obtain the twisted $\mathcal{K}^{ \otimes l} -$operad or $\mathcal{K}^{ \otimes l -2} \mathcal{L} -$operad. In such a way one can define for any cyclic operad the notion of $\mathcal{P} -$algebra on complexes with symmetric or antisymmetric inner products of arbitrary degree. 

\section{Algebras over Feynman transform.\label{sectFeynm}}
\bigskip In this Section I write down in several equivalent forms the equation defining the structure of algebra over
Feynman transform for an arbitrary twisted modular $\mathcal{D} -$operad. As I show in Section \ref{dgla} this is in fact a Maurer-Cartan
equation in the differential graded Lie algebra of geometric origin. 

Let us consider first the case of $\mathcal{F}_{\mathcal{D}} \mathcal{P} -$algebra structure on the chain complex $V$ with symmetric inner product $B$ of degree $l$, $B :V^{ \otimes 2} \rightarrow k [ -l]$, this implies that $\mathcal{D} \simeq \mathcal{K}^{ \otimes 1 -l}$. The $\mathcal{F}_{\mathcal{D}} \mathcal{P} -$a$\lg $ebra structure on $V$ is a morphism of twisted modular $\mathcal{K}^{ \otimes l} -$operads $\widehat{m} :\mathcal{F}_{\mathcal{D}} \mathcal{P} \rightarrow \mathcal{E} [V]$. Since, forgetting the differential, $\mathcal{F}_{\mathcal{D}} \mathcal{P}$ is the free twisted modular operad generated by stable $\mathbb{S} -$module $\mathcal{P} ((n ,b))^{d u a l}$, the $\mathcal{F}_{\mathcal{D}} \mathcal{P}$-algebra structure on $V$ is determined by a set of $\mathbb{S}_{n} -$equivariant linear maps \begin{equation*}\widehat{m}_{n ,b} :\mathcal{P} ((n ,b))^{d u a l} \rightarrow V \text{}^{ \otimes n}
\end{equation*} or, equivalently, the set of degree zero elements
\begin{equation}m_{n ,b} \in (V \text{}^{ \otimes n} \otimes \mathcal{P} ((n ,b)))^{\mathbb{S}_{n}} . \label{mnb}
\end{equation}As above, for any finite set $I$ one can extend this to the collection of elements $\{\widehat{m}_{I ,b}\}$: $\widehat{m}_{I ,b} \in H o m (\mathcal{P} ((I ,b))^{d u a l} ,V \text{}^{ \otimes I})$, and $\{m_{I ,b}\}$: $m_{I ,b} \in (V \text{}^{ \otimes I} \otimes \mathcal{P} ((I ,b)))^{A u t (I)}$, using an arbitrary bijection $I \leftrightarrow \{1 ,\ldots  ,\vert I\vert \}$. An element from the subspace
\begin{equation}(\mathcal{K}^{ \otimes l} (G) \otimes \mathcal{P}^{d u a l} ((G)))_{A u t (G)} \subset \mathcal{F}_{\mathcal{D}} \mathcal{P} ((n ,b)) \label{subFdP}
\end{equation}corresponding to a stable graph $G \in [\Gamma ((n ,b))]$, is represented as the result of composition $\mu _{G}^{\mathcal{F}_{\mathcal{D}} \mathcal{P}}$acting on an element from $\underset{v \in V e r t (G)}{\bigotimes }\mathcal{P} ((n (v) ,b (v)))^{d u a l}$. It follows that on the subspace (\ref{subFdP}) the map \begin{equation*}\widehat{m} :\mathcal{F}_{\mathcal{D}} \mathcal{P} ((n ,b)) \rightarrow V \text{}^{ \otimes n}
\end{equation*} is given by
\begin{equation}\mu _{G}^{\mathcal{E} [V]} \circ \left (\underset{v \in V e r t (G)}{\bigotimes }\widehat{m}_{L e g (v) ,b (v)}\right ) . \label{mhatG}
\end{equation}The map $\widehat{m} :\mathcal{F}_{\mathcal{D}} \mathcal{P} \rightarrow \mathcal{E} [V]$ corresponding to the set $\{m_{n ,b}\}$ is a morphism of twisted modular $\mathcal{K}^{ \otimes l}$-operads if and only if
\begin{equation}d_{\mathcal{E} [V]} \circ \widehat{m} =\widehat{m} \circ d_{\mathcal{F}_{\mathcal{D}} \mathcal{P}} . \label{dmmd}
\end{equation}Since the differentials $d_{\mathcal{F}_{\mathcal{D}} \mathcal{P}}$, $d_{\mathcal{E} [V]}$ are compatible with the composition maps $\mu _{G}^{\mathcal{F}_{\mathcal{D}} \mathcal{P}}$, $\mu _{G}^{\mathcal{E} [V]}$, it is sufficient to check the condition (\ref{dmmd}) on the generators of $\mathcal{F}_{\mathcal{D}} \mathcal{P}$. On the subspace $\mathcal{P} ((n ,b))^{d u a l}$ the differential $d_{\mathcal{F}_{\mathcal{D}} \mathcal{P}}$ is a sum of $d_{\mathcal{P}^{d u a l}}$ plus sum of the adjoints to the structure maps $\mu _{G}^{\mathcal{P}}$ (\ref{mugammaK}) corresponding to the stable graphs with single edge $G_{(I_{1} ,I_{2} ,b_{1} ,b_{2})}$ and $G_{n ,b}$, multiplied by $e [1]$, the canonical element of degree (-1) from $D e t (\{e\})$ where $e$ is the unique edge of the graph $G$:\begin{equation*}d_{\mathcal{F}_{\mathcal{D}} \mathcal{P}} =d_{\mathcal{P}^{d u a l}} +e [1] \otimes (\mu _{G_{n ,b}}^{\mathcal{P}})^{d u a l} +\sum _{\{1 ,\ldots  ,n\} =I_{1} \sqcup I_{2} ,\;b_{1} +b_{2} =b}e [1] \otimes (\mu _{G_{(I_{1} ,I_{2} ,b_{1} ,b_{2})}}^{\mathcal{P}})^{d u a l}\text{.}
\end{equation*} We see that the condition (\ref{dmmd}) is equivalent to
\begin{multline}d_{V^{ \otimes n}} \widehat{m}_{n ,b} =\widehat{m}_{n ,b} d_{\mathcal{P}^{d u a l}} +\mu _{G_{n ,b}}^{\mathcal{E} [V]} (e[1] \otimes \widehat{m}_{\{1 ,\ldots  ,n\} \sqcup \{f ,f^{ \prime }\} ,b -1} \mu _{G_{n ,b}}^{\mathcal{P}} \text{}^{d u a l} ) + \\
 +\frac{1}{2} \sum _{\substack{\{1 ,\ldots  ,n\} =I_{1} \sqcup I_{2} \\ b_{1} +b_{2} =b}}\mu _{G_{(I_{1} ,I_{2} ,b_{1} ,b_{2})}}^{\mathcal{E} [V]} (e[1] \otimes (\widehat{m}_{I_{1} \sqcup \{f\} ,b_{1}} \otimes \widehat{m}_{I_{2} \sqcup \{f^{ \prime }\} ,b_{2}}) \mu _{G_{(I_{1} ,I_{2} ,b_{1} ,b_{2})}}^{\mathcal{P}} \text{}^{d u a l}) . \label{modAinfhat}\end{multline}Recall that $\mathcal{E} [V]$ is a $K^{ \otimes l} -$operad and \begin{equation*}\mathcal{K}^{ \otimes l} (G_{n ,b}) =\mathcal{K}^{ \otimes l} (G_{(I_{1} ,I_{2} ,b_{1} ,b_{2})}) =(k[1])^{ \otimes l}\text{.}
\end{equation*} Then $\mu _{G_{n ,b}}^{\mathcal{E} [V]}$ is the contraction $(V^{ \otimes \{f ,f^{ \prime }\}} \otimes V^{ \otimes n}) [l] \rightarrow V^{ \otimes n}$ with the bilinear form $B$ applied to the factors corresponding to $f ,f^{ \prime }$ and $\mu _{G_{(I_{1} ,I_{2} ,b_{1} ,b_{2})}}^{\mathcal{E} [V]}$ is the similar contraction $(V^{ \otimes I_{1} \sqcup \{f\}} \otimes V^{ \otimes I_{2} \sqcup \{f^{ \prime }\}}) [l] \rightarrow V^{ \otimes n}$. I denote these contractions by $B_{f ,f^{ \prime }}$. If I introduce the degree $(l -1)$ maps, which are the evaluation on $(e[1])^{ \otimes 1 -l}$ of the $\mathcal{P} -$compositions $\mu _{G}^{\mathcal{P}}$
\begin{multline}\phi _{f ,f^{ \prime }}^{\mathcal{P}} :\mathcal{P} ((I_{1} \sqcup \{f\} ,b_{1})) \otimes \mathcal{P} ((I_{2} \sqcup \{f^{ \prime }\} ,b_{2})) \rightarrow \mathcal{P} ((n ,b_{1} +b_{2})) [l -1] \label{phipffK} \\
\phi _{f ,f^{ \prime }}^{\mathcal{P}} =\mu _{G_{(I_{1} ,I_{2} ,b_{1} ,b_{2})}}^{\mathcal{P}} (e[1]^{ \otimes 1 -l})\end{multline}
\begin{multline}\xi _{f ,f^{ \prime }}^{\mathcal{P}} :\mathcal{P} ((\{1 ,\ldots  n\} \sqcup \{f ,f^{ \prime }\} ,b -1)) \rightarrow \mathcal{P} ((n ,b)) [l -1] \label{dsetaffK} \\
\xi _{f ,f^{ \prime }}^{\mathcal{P}} =\mu _{G_{n ,b}}^{\mathcal{P}} (e[1]^{ \otimes 1 -l})\end{multline}then in terms of $m_{I ,b}$ the equation (\ref{modAinfhat}) is written as
\begin{multline}(d_{\mathcal{P}} +d_{V}) m_{n ,b} -B_{f ,f^{ \prime }} \otimes \xi _{f ,f^{ \prime }}^{\mathcal{P}} m_{\{1 ,\ldots  ,n\} \sqcup \{f ,f^{ \prime }\} ,b -1} \\
 -\frac{1}{2} \sum _{\substack{\{1 ,\ldots  ,n\} =I_{1} \sqcup I_{2} \\ b_{1} +b_{2} =b}}B_{f ,f^{ \prime }} \otimes \phi _{f ,f^{ \prime }}^{\mathcal{P}} (m_{I_{1} \sqcup \{f\} ,b_{1}} \otimes m_{I_{2} \sqcup \{f^{ \prime }\} ,b_{2}}) =0. \label{modulAinftyEq}\end{multline}Let us put $m_{n} =\sum _{b}z^{b} m_{n ,b}$ then
\begin{multline}(d_{\mathcal{P}} +d_{V}) m_{n} -z B_{f ,f^{ \prime }} \otimes \xi _{f ,f^{ \prime }}^{\mathcal{P}} m_{\{1 ,\ldots  ,n\} \sqcup \{f ,f^{ \prime }\}} \\
 -\frac{1}{2} \sum _{\{1 ,\ldots  ,n\} =I_{1} \sqcup I_{2}}B_{f ,f^{ \prime }} \otimes \phi _{f ,f^{ \prime }}^{\mathcal{P}} (m_{I_{1} \sqcup \{f\}} \otimes m_{I_{2} \sqcup \{f^{ \prime }\}}) =0. \label{modulAinflambda}\end{multline}

For the sake of simplicity I can rewrite this equation directly in terms of $\{m_{n}\}$ using the canonical projection $\frac{1}{n !} (\Sigma _{\sigma  \in \mathbb{S}_{n}} \sigma )$ to the $\mathbb{S}_{n} -$invariant subspace. Notice that the terms of the last summand in (\ref{modulAinftyEq})
are invariant with respect to the action of the subgroup $\mathbb{S}_{c a r d (I_{1})} \times \mathbb{S}_{c a r d (I_{2})}$. The $\mathbb{S}_{n}$-equivariance of composition maps (\ref{mugamma}) implies that the result of the action of arbitrary
element $\sigma $ of $\mathbb{S}_{n}$ on such a term is $(B_{f ,f^{ \prime }} \phi _{\tilde{I}_{1} \tilde{I}_{2}}) (m_{\tilde{I}_{1} \sqcup \{f\} ,b_{1}} \otimes m_{\tilde{I}_{2} \sqcup \{f^{ \prime }\} ,b_{2}})$ with $\tilde{I}_{1} =\sigma  (I_{1})$, $\tilde{I}_{2} =\sigma  (I_{2})$. Let us single out the term $B_{f ,f^{ \prime }} \phi _{f ,f^{ \prime }}^{\mathcal{P}} (m_{I_{1} \sqcup \{f\}} \otimes m_{I_{2} \sqcup \{f^{ \prime }\}})$ with $I_{1} =\{1 ,\ldots  ,n_{1}\}$ and denote via $o^{\mathcal{P}}$ the composition $\phi _{f ,f^{ \prime }}^{\mathcal{P}}$after the identification of $I_{1} \sqcup \{f\}$ with $\{1 ,\ldots  ,n_{1} +1\}$ such that $i \leftrightarrow i$ for $1 \leq i \leq n_{1}$ and $f \leftrightarrow n_{1} +1$, and the identification of $I_{2} \sqcup \{f^{ \prime }\}$ with $\{1 ,\ldots  ,n_{2} +1\}$ such that $f^{ \prime } \leftrightarrow 1$, $i \leftrightarrow i -n_{1} +1$ for $n_{1} +1 \leq i \leq n$ : \begin{equation*}o^{\mathcal{P}} :\mathcal{P} ((n_{1} +1 ,b)) \otimes \mathcal{P} ((n -n_{1} +1 ,b^{ \prime })) \rightarrow \mathcal{P} ((n ,b +b^{ \prime })) [l -1]\text{.}
\end{equation*} Let us identify also in the first summand $\{1 ,\ldots  ,n\} \sqcup \{f ,f^{ \prime }\}$ with $\{1 ,\ldots  ,n +2\}$ in such a way that $i \leftrightarrow i$ for $i \in \{1 ,\ldots  ,n\}$ and $f \leftrightarrow n +1$, $f^{ \prime } \leftrightarrow n +2$. Then I can write the equation (\ref{modulAinftyEq}) as
\begin{multline}(d_{\mathcal{P}} +d_{V}) m_{n} -z B_{(n +1 ,n +2)} \xi _{(n +1 ,n +2)} m_{n +2} + \\
 -\frac{1}{2} \sum _{n_{1} +n_{2} =n}\frac{1}{n_{1} ! n_{2} !} \sum _{\sigma  \in \mathbb{S}_{n}}\sigma  (B_{(n_{1} +1 ,1)} o^{\mathcal{P}} (m_{n_{1} +1} \otimes m_{n_{2} +1})) =0. \label{modulAinfSigma}\end{multline}

\bigskip In the case when $V$ is a chain complexes with antisymmetric inner product of degree $l$ the $\mathcal{F}_{\mathcal{D}} \mathcal{P} -$algebra structures on $V$ is described again by $\widehat{m}_{n ,b}$ satisfying the equation (\ref{modAinfhat}). In this case $\mathcal{P}$ must be a $\mathcal{D} -$operad with $\mathcal{D} =\mathcal{K}^{ \otimes  -1 -l} \mathcal{L}$. If one rewrites this equation in terms of $\{m_{n}\}$ then one gets the equations (\ref{modulAinflambda}),
(\ref{modulAinfSigma}) with degree $(l -1)$ evaluations of the compositions
\begin{equation}\phi _{f ,f^{ \prime }}^{\mathcal{P}} =\mu _{G_{(I_{1} ,I_{2} ,b_{1} ,b_{2})}}^{\mathcal{P}} (e[1]^{ \otimes  -1 -l} \otimes (f[1] \wedge f^{ \prime } [1])) \label{phipffanti}
\end{equation}
\begin{equation}\xi _{f ,f^{ \prime }}^{\mathcal{P}} =\mu _{G_{n ,b}}^{\mathcal{P}} (e[1]^{ \otimes  -1 -l} \otimes (f[1] \wedge f^{ \prime } [1])) \label{dsetaPffanti}
\end{equation}and $B_{f ,f^{ \prime }} :V^{ \otimes \{f\}} \otimes V^{ \otimes \{f^{ \prime }\}} \rightarrow k [ -l]$ the degree $( -l)$ contraction with $B$. I have proved the following result:

\begin{proposition}
\label{propFdPstr}Let $\mathcal{P}$ be a twisted $\mathcal{K}^{ \otimes 1 -l} -$modular operad (respectively twisted $\mathcal{K}^{ \otimes  -1 -l} \mathcal{L} -$modular operad). The set $\{m_{n ,b}\}$, $m_{n ,b} \in (V^{ \otimes n} \otimes \mathcal{P} ((n ,b)))_{0}^{\mathbb{S}_{n}}$ defines a modular $\mathcal{F}_{\mathcal{D}} \mathcal{P} -$algebra structure on the chain complex $V$ with symmetric (respectively antisymmetric) inner product $B$ of degree $l$, $B :V^{ \otimes 2} \rightarrow k [ -l]$, iff $m_{n} =\sum _{b}z^{b} m_{n ,b}$ satisfies the equation (\ref{modulAinfSigma}). 
\end{proposition}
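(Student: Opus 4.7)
The plan is to unwind the definition of a morphism of twisted modular operads $\widehat{m}: \mathcal{F}_{\mathcal{D}}\mathcal{P} \to \mathcal{E}[V]$ in four steps, essentially formalizing the computation already sketched before the statement.

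First, I invoke the adjunction between the free twisted modular $\mathcal{D}^{\vee}$-operad functor and the forgetful functor to stable $\mathbb{S}$-modules. Since the underlying $\mathbb{S}$-module of $\mathcal{F}_{\mathcal{D}}\mathcal{P}$ is $\mathbb{M}_{\mathcal{D}^{\vee}}\mathcal{P}^{dual}$, a morphism of twisted modular operads $\widehat{m}$ to $\mathcal{E}[V]$ (ignoring the differential) is uniquely determined by an $\mathbb{S}_n$-equivariant collection $\widehat{m}_{n,b}: \mathcal{P}((n,b))^{dual} \to V^{\otimes n}$, and the required cocycle-match $\mathcal{D}^{\vee}=\mathcal{K}\mathcal{D}^{-1}$ with $\mathcal{E}[V]$ being a $\mathcal{K}^{\otimes l}$-operad (resp.\ $\mathcal{K}^{\otimes l-2}\mathcal{L}$-operad in the antisymmetric case) forces the indicated values of $\mathcal{D}$. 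The pairing $B$ identifies $V$ with its shifted dual, converting $\widehat{m}_{n,b}$ into the invariant element $m_{n,b} \in (V^{\otimes n} \otimes \mathcal{P}((n,b)))_{0}^{\mathbb{S}_n}$. On a stable graph $G$, the extension of $\widehat{m}$ is forced by operadic compatibility to be the formula (\ref{mhatG}).

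Second, I verify that the chain map condition (\ref{dmmd}) needs only be checked on generators: compatibility of $\widehat{m}$ with arbitrary $\mu_G^{\mathcal{F}_{\mathcal{D}}\mathcal{P}}$ together with the Leibniz property of $d_{\mathcal{E}[V]}$ with respect to $\mu_G^{\mathcal{E}[V]}$ propagates (\ref{dmmd}) from generators to $\mathbb{M}_{\mathcal{D}^{\vee}}\mathcal{P}^{dual}$ by induction on the number of edges. On the generator subspace $\mathcal{P}((n,b))^{dual}$, the Feynman differential $d_{\mathcal{F}_{\mathcal{D}}\mathcal{P}}$ splits as $\partial_{\mathcal{P}^{dual}} + \partial_{\mu}$, where $\partial_{\mu}$ restricts to the duals of the two types of single-edge structure maps $\mu_{G_{(I_1,I_2,b_1,b_2)}}^{\mathcal{P}}$ and $\mu_{G_{n,b}}^{\mathcal{P}}$ multiplied by $e[1]$. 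Pushing (\ref{dmmd}) through gives exactly (\ref{modAinfhat}).

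Third, I translate (\ref{modAinfhat}) into the $m_{n,b}$ form by contracting against the $\mathbb{S}_n$-invariant pairings. The definitions (\ref{phipffK}), (\ref{dsetaffK}) (or (\ref{phipffanti}), (\ref{dsetaPffanti}) in the antisymmetric case) absorb the $e[1]^{\otimes 1-l}$ (respectively $e[1]^{\otimes -1-l}\otimes(f[1]\wedge f'[1])$) factor into degree $(l-1)$ maps on $\mathcal{P}$; the dual-pairing then couples these with the bilinear form contractions $B_{f,f'}$ coming from $\mu_{G}^{\mathcal{E}[V]}$. Assembling the $b$-components into the generating series $m_n = \sum_b z^b m_{n,b}$ turns the loop-composition term into $zB_{f,f'}\otimes \xi_{f,f'}^{\mathcal{P}} m_{\{1,\ldots,n\}\sqcup\{f,f'\}}$ and the separating-edge sum into its unbounded version, yielding (\ref{modulAinflambda}). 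Finally, symmetrizing via the canonical $\mathbb{S}_n$-projection $\tfrac{1}{n!}\sum_{\sigma\in\mathbb{S}_n}\sigma$ on the already-$\mathbb{S}_{|I_1|}\times\mathbb{S}_{|I_2|}$-invariant summands (justified by $\mathbb{S}_n$-equivariance of $\mu_G^{\mathcal{P}}$ under permutations carrying $I_1,I_2$ to $\tilde{I}_1,\tilde{I}_2$) rewrites the splitting sum as the uniform sum over $\sigma\in\mathbb{S}_n$ with factors $1/(n_1!n_2!)$, producing (\ref{modulAinfSigma}).

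The main obstacle is the sign and cocycle bookkeeping in step three: one must verify that the shifts carried by $e[1]$, by $\mathcal{K}^{\otimes l}(G)$ on the $\mathcal{E}[V]$ side, and by the degree-$(l-1)$ renormalized operations $\phi^{\mathcal{P}},\xi^{\mathcal{P}}$ cancel coherently, and that the antisymmetric case -- in which the extra factor $f[1]\wedge f'[1]$ appears due to the $\mathcal{L}$ twisting and the $\mathbb{S}_2$-antisymmetry of $B$ on $V^{\otimes\{f,f'\}}$ -- produces the same equation (\ref{modulAinfSigma}) once these signs are absorbed. The converse (any solution of (\ref{modulAinfSigma}) defines a morphism) is then immediate by reversing the correspondence in step one, since the equation encodes precisely the generator-level chain-map identity from which the full morphism property is recovered by the freeness argument.
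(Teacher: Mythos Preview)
Your proposal is correct and follows essentially the same approach as the paper: the paper's argument preceding the proposition is exactly the four-step reduction you describe (freeness of $\mathbb{M}_{\mathcal{D}^{\vee}}\mathcal{P}^{dual}$, restriction of the chain-map condition to generators via compatibility of differentials with $\mu_G$, unpacking of $d_{\mathcal{F}_{\mathcal{D}}\mathcal{P}}$ as $\partial_{\mathcal{P}^{dual}}+\partial_\mu$ over one-edge graphs, and the $\mathbb{S}_n$-symmetrization), with the antisymmetric case handled by the same computation using the modified evaluations (\ref{phipffanti}), (\ref{dsetaPffanti}). Your explicit flagging of the sign and cocycle bookkeeping as the main verification point is apt but does not differ from what the paper does implicitly.
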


\section{Differential graded Lie algebra $\underset{n ,b}{\bigoplus }(V^{ \otimes n} \otimes \mathcal{P} ((n ,b)))^{\mathbb{S}_{n}} [1]$.\label{dgla}}
Recall that the equation defining the algebra structure over the \\
\ $c o b a r -$transformation of some cyclic operad $\mathcal{A}$ can be written as $[h ,h] =0$, where $h$ is a function on the symplectic affine $\mathcal{A} -$manifold. The $c o b a r -$transformation is the tree-level part of the Feynman transform. As we shall see below in Section \ref{freepalg}
the equation (\ref{modulAinflambda}) describing the algebra over the Feynman transform for the twisted modular
operad $\mathcal{P}$ is the principal equation of the Batalin-Vilkovisky $\mathcal{P} -$geometry on the affine $\mathcal{P}$-manifold. 

\subsection{Odd vector field on the space of morphisms $\mathbb{M}_{\mathcal{D}^{ \vee }} \mathcal{P}^{d u a l} \rightarrow \mathcal{E} [V]$.}
Let $V$ be chain complex with symmetric or antisymmetric inner product $B$ of degree $l$. Let $\mathcal{P}$ be a modular $\mathcal{D} -$operad, so that $\mathcal{D}^{ \vee }$ is the cocycle corresponding to the twisting of $\mathcal{E} [V]$, i.e. $\mathcal{D} =\mathcal{K}^{ \otimes 1 -l}$ for symmetric $B$ and $\mathcal{D} =\mathcal{K}^{ \otimes  -1 -l} \mathcal{L}$ for antisymmetric $B$. I explain in this subsection that the linear and quadratic terms in the equation (\ref{modulAinfSigma})
define the structure of differential graded Lie algebra on the graded $k$-vector space $\underset{n ,b}{\bigoplus }(V^{ \otimes n} \otimes \mathcal{P} ((n ,b)))^{\mathbb{S}_{n}} [1]$. 

Let $M o r(\mathbb{M}_{\mathcal{D}^{ \vee }} \mathcal{P}^{d u a l} ,\mathcal{E}[V])$ denotes the space of operad morphisms from $\mathbb{M}_{\mathcal{D}^{ \vee }} \mathcal{P}^{d u a l}$ to $\mathcal{E} [V]$. Since $\mathbb{M}_{\mathcal{D}^{ \vee }} \mathcal{P}^{d u a l}$ is a free modular operad I have \begin{equation*}M o r(\mathbb{M}_{\mathcal{D}^{ \vee }} \mathcal{P}^{d u a l} ,\mathcal{E}[V]) =(\underset{n ,b}{\bigoplus }(V^{ \otimes n} \otimes \mathcal{P} ((n ,b)))^{\mathbb{S}_{n}})_{0}\text{.}
\end{equation*} One can consider the corresponding graded version of the space of morphism  \\
\ $\underline{M o r}(\mathbb{M}_{\mathcal{D}^{ \vee }} \mathcal{P}^{d u a l} ,\mathcal{E}[V])$. It is the affine $\mathbb{Z} -$ graded scheme representing the functor $R \rightarrow M o r(\mathbb{M}_{\mathcal{D}^{ \vee }} \mathcal{P}^{d u a l} \otimes R ,\mathcal{E}[V])$ where $R$ is a graded commutatitve $k -$algebra. For the graded version I have \begin{equation*}\underline{M o r}(\mathbb{M}_{\mathcal{D}^{ \vee }} \mathcal{P}^{d u a l} ,\mathcal{E}[V]) =\underset{n ,b}{\bigoplus }(V^{ \otimes n} \otimes \mathcal{P} ((n ,b)))^{\mathbb{S}_{n}}\text{.}
\end{equation*} The differential $d_{\mathcal{F}}$ acting on $\mathbb{M}_{\mathcal{D}^{ \vee }} \mathcal{P}^{d u a l}$ induces the canonical odd vector field on $\underline{M o r}(\mathbb{M}_{\mathcal{D}^{ \vee }} \mathcal{P}^{d u a l} ,\mathcal{E}[V])$:
\begin{equation}Q (\varphi ) =d_{V} \varphi  -\varphi  d_{\mathcal{F}} . \label{qphi}
\end{equation}Since $d_{\mathcal{F}}^{2} =d_{V}^{2} =0$ it follows that
\begin{equation}[Q ,Q] =0. \label{qq}
\end{equation}The equation (\ref{dmmd}) describing the $\mathcal{F}_{\mathcal{D}} \mathcal{P} -$algebra structures on $V$ is precisely the equation \begin{equation*}Q (\varphi ) =0
\end{equation*} considered on the subspace $\deg \varphi  =0$. The same calculation as in the previous Section shows that the vector field $Q$ has only linear and quadratic components and they are given by the linear in $\{m_{n}\}$ and the quadratic in $\{m_{n}\}$ terms in (\ref{modulAinfSigma}).
The vector field $Q$ induces the odd coderivation of free cocommutative coalgebra generated by the $\mathbb{Z} -$ graded vector space $F =\underset{n ,b}{\bigoplus }(V^{ \otimes n} \otimes \mathcal{P} ((n ,b)))^{\mathbb{S}_{n}}$. The equation (\ref{qq}) is equivalent to three equations for the linear and quadratic components of
$Q$. These are the identities for a differential and a Lie bracket in a differential graded Lie algebra. I have proved the following
result.

\begin{proposition}
\label{propdgla}The linear and the quadratic terms in (\ref{modulAinfSigma})
are components of the structure of differential graded Lie algebra on $F [1]$ (in the category of chain complexes). In particular,
\begin{multline}\{m_{1} ,m_{2}\} =\frac{( -1)^{\bar{m}_{1} \bar{m}_{2}}}{n_{1} ! n_{2} !} \sum _{\sigma  \in \mathbb{S}_{n}}\sigma  (o^{\mathcal{E} [V]} \otimes o^{\mathcal{P}}) (m_{1} \otimes m_{2}) ,\; \label{brack} \\
m_{i} \in (V^{ \otimes n_{i} +1} \otimes \mathcal{P} ((n_{i} +1 ,b_{i})))^{\mathbb{S}_{n_{i} +1}} ,i =1 ,2\end{multline}defines the odd Lie bracket and
\begin{equation} \Delta m =( -1)^{\bar{m}} \xi _{n -1 ,n}^{\mathcal{E} [V]} \otimes \xi _{n -1 ,n}^{\mathcal{P}} m ,\text{\ \ }m \in (V^{ \otimes n} \otimes \mathcal{P} ((n ,b)))^{\mathbb{S}_{n}} \label{delta}
\end{equation}is a degree $( -1)$ differential. 
\end{proposition}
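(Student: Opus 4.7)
The plan is to use the standard dictionary between odd square-zero coderivations of the free cocommutative coalgebra $S^{\bullet}(F[1])$ and $L_{\infty}$-algebra structures on $F[1]$: a coderivation whose corestriction to $F[1]$ takes values in $F[1]\oplus S^{2}(F[1])$ corresponds to an $L_{\infty}$-structure with only unary and binary operations, and the full set of $L_{\infty}$-relations then collapses to the three defining axioms of a differential graded Lie algebra.

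First I would note that $Q$ from (\ref{qphi}) is odd, of total degree $+1$, and satisfies $[Q,Q]=0$ by (\ref{qq}). Examining (\ref{modulAinfSigma}), which is $Q(\varphi)=0$ written in the linear coordinates $\{m_{n,b}\}$ on the affine graded scheme $\underline{M o r}(\mathbb{M}_{\mathcal{D}^{\vee}}\mathcal{P}^{d u a l},\mathcal{E}[V])$, one sees that $Q$ has only a linear part (the intrinsic differential $(d_{\mathcal{P}}+d_{V})m_{n}$ plus the $\xi$-loop term $-z B_{(n+1,n+2)}\xi^{\mathcal{E}[V]}_{(n+1,n+2)}\!\otimes\xi^{\mathcal{P}}_{(n+1,n+2)} m_{n+2}$) and a quadratic part (the $\mathbb{S}_{n}$-symmetrised $B\otimes o^{\mathcal{P}}$ term). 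Dualising, the induced odd coderivation $Q^{\ast}$ on $S^{\bullet}(F[1])$ has corestriction $F[1]\to F[1]\oplus S^{2}(F[1])$ only, and by the Lada--Stasheff dictionary it defines exactly one unary operation $\Delta$ and one binary graded-symmetric operation $\{\,,\,\}$ on $F[1]$.

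Second, I would expand $(Q^{\ast})^{2}=0$ on $S^{\bullet}(F[1])$ by polynomial weight. The weight-one component gives $\Delta^{2}=0$, so $\Delta$ is a differential; the weight-two component gives that $\Delta$ is a graded derivation of $\{\,,\,\}$; and the weight-three component gives the graded Jacobi identity for $\{\,,\,\}$. These are precisely the defining identities of a dgla structure on $F[1]$. To identify the operations with the explicit formulas (\ref{brack}) and (\ref{delta}) it then suffices to read off the quadratic and the non-intrinsic linear parts of $Q$ from (\ref{modulAinfSigma}) and transport them through the degree shift $F\rightsquigarrow F[1]$: the quadratic part, after restoring the Koszul sign $(-1)^{\bar{m}_{1}\bar{m}_{2}}$ produced by the shift of both factors, matches the $\mathbb{S}_{n}$-symmetrised expression (\ref{brack}); the $\xi$-loop term, after the shift-induced $(-1)^{\bar{m}}$, matches $\Delta$ in (\ref{delta}).

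The main obstacle is the sign bookkeeping across the suspension $F\rightsquigarrow F[1]$ combined with the twisting elements $e[1]^{\otimes 1-l}$ (symmetric case) and $e[1]^{\otimes -1-l}\otimes(f[1]\wedge f^{\prime}[1])$ (antisymmetric case) hidden inside $\phi^{\mathcal{P}}_{f,f^{\prime}}$, $\xi^{\mathcal{P}}_{f,f^{\prime}}$ of (\ref{phipffK})--(\ref{phipffanti}) and in the corresponding $\mathcal{E}[V]$-compositions. One must check that these signs conspire so that (i)~$\Delta$ is odd and acts by coderivation on $S^{\bullet}(F[1])$, (ii)~$\{\,,\,\}$ is graded symmetric of total odd degree on $F[1]$, and (iii)~the coderivation formulas recover exactly (\ref{brack}) and (\ref{delta}). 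Once these signs are verified, the three dgla identities are automatic consequences of the already-established identity $[Q,Q]=0$, and no further input beyond the associativity of $\mu^{\mathcal{P}}$ and the (anti)symmetry of $B$ is needed.
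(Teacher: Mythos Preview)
Your proposal is correct and follows essentially the same route as the paper: the paper also observes that the vector field $Q$ of (\ref{qphi}) has only linear and quadratic components, passes to the induced odd coderivation of the free cocommutative coalgebra on $F$, and then reads the three dgla identities off from $[Q,Q]=0$. Your write-up is more explicit about the Lada--Stasheff dictionary and the weight-by-weight unpacking of $(Q^{\ast})^{2}=0$, and you flag the sign bookkeeping more carefully than the paper does, but the underlying argument is the same.
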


Notice that the bracket (\ref{brack})
is defined in terms of the compositions of the underlying cyclic operads, while the operator (\ref{delta}) is defined
in terms of the modular compositions along the graphs of type $G_{n ,b}$. Remark that the bracket essentually coincides in the case of cyclic operads with the Lie brackets described in \cite{KapMan}
and \cite{Ginz}. If I put $d m =(( -1)^{\bar{m} +1} d_{\mathcal{P}} -d_{V}) m$ then our basic equation (\ref{modulAinflambda}) becomes the familiar equation
\begin{equation}d m +z  \Delta m +\frac{1}{2} \{m ,m\} =0. \label{mster}
\end{equation}

\bigskip Combining Propositions \ref{propdgla}
and \ref{propFdPstr} I get the following result

\begin{theorem}
\label{theorem1}The modular $\mathcal{F}_{\mathcal{D}} \mathcal{P} -$algebra structures on the chain complex $V$ with symmetric (respectively antisymmetric) inner product $B$ of degree $l$, $B :V^{ \otimes 2} \rightarrow k [ -l]$, where $\mathcal{P}$ is an arbitrary twisted $\mathcal{K}^{ \otimes 1 -l} -$modular operad (respectively $\mathcal{K}^{ \otimes  -1 -l} \mathcal{L}$-moduar operad), are in one-to-one correspondence with solutions of the quantum master equation (\ref{mster})
in the space $(\underset{n ,b}{\bigoplus }(V^{ \otimes n} \otimes \mathcal{P} ((n ,b)))^{\mathbb{S}_{n}})_{0}$. 
\end{theorem}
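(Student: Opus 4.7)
The plan is to obtain Theorem \ref{theorem1} by a direct assembly of Propositions \ref{propFdPstr} and \ref{propdgla}, which between them have already done essentially all of the computational work. The theorem is the statement that the equation (\ref{modulAinfSigma}) characterizing $\mathcal{F}_{\mathcal{D}} \mathcal{P}$-algebra structures coincides, term by term, with the quantum master equation (\ref{mster}) in the differential graded Lie algebra structure on $F[1]=\bigoplus_{n,b}(V^{\otimes n}\otimes \mathcal{P}((n,b)))^{\mathbb{S}_n}[1]$ furnished by Proposition \ref{propdgla}.

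First I would invoke Proposition \ref{propFdPstr} to reduce the theorem to the claim that equation (\ref{modulAinfSigma}), written for the generating series $m=\sum_n m_n$ with $m_n=\sum_b z^b m_{n,b}$, is identical to $dm+z\Delta m+\tfrac12\{m,m\}=0$. Then, using Proposition \ref{propdgla}, I would identify each summand of (\ref{modulAinfSigma}) with a piece of that master equation. The linear internal piece $(d_{\mathcal{P}}+d_V)m_n$ is the DGLA differential $dm$ once one adopts the sign convention $dm=((-1)^{\bar m+1}d_{\mathcal{P}}-d_V)m$ fixed in Proposition \ref{propdgla}. The linear loop term $-zB_{(n+1,n+2)}\xi_{(n+1,n+2)}m_{n+2}$ matches $z\Delta m$ through (\ref{delta}), since $\xi^{\mathcal{P}}$ is the loop composition (\ref{dsetaffK}) along $G_{n,b}$ in $\mathcal{P}$ and the $B$-contraction is the corresponding loop composition $\xi^{\mathcal{E}[V]}$ in $\mathcal{E}[V]$. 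Finally, the quadratic sum, symmetrized by $\tfrac{1}{n_1!n_2!}\sum_{\sigma\in\mathbb{S}_n}\sigma$, is precisely $\tfrac12\{m,m\}$ for the bracket (\ref{brack}), since $o^{\mathcal{P}}$ combined with the $B$-contraction at the glued pair of flags, after the same symmetrization, is the bracket of Proposition \ref{propdgla}.

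The main obstacle is sign bookkeeping. The suspension $F\to F[1]$ introduces Koszul signs that redistribute the degree shifts coming from the twisting $\mathcal{D}^{\vee}=\mathcal{K}\mathcal{D}^{-1}$ in the Feynman transform; moreover the operadic differential on $\mathbb{M}_{\mathcal{D}^{\vee}}\mathcal{P}^{d u a l}$ and the internal differential on $V$ must be combined with signs respecting the $\mathbb{Z}/2$-grading of $m$. I would verify consistency on the two classes of generators that contribute to $d_{\mathcal{F}}$, namely the loop graph $G_{n,b}$ and the splitting graph $G_{(I_1,I_2,b_1,b_2)}$, exactly as in the derivation of (\ref{modulAinflambda})--(\ref{modulAinfSigma}) in Section \ref{sectFeynm}, and note that the sign conventions introduced in Proposition \ref{propdgla} are calibrated precisely to absorb the residual $(-1)^{\bar m+1}$ on $d_{\mathcal{P}}$. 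Assembling the three identifications exhibits (\ref{modulAinfSigma}) as (\ref{mster}); since the solutions live in the degree-zero subspace of $F$, which is the Maurer--Cartan degree in $F[1]$, the bijection asserted by the theorem follows.
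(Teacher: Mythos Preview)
Your proposal is correct and follows exactly the route the paper takes: the theorem is obtained by combining Propositions \ref{propFdPstr} and \ref{propdgla}, identifying the three groups of terms in (\ref{modulAinfSigma}) with $dm$, $z\Delta m$, and $\tfrac12\{m,m\}$ respectively. One small remark: the sign convention $dm=((-1)^{\bar m+1}d_{\mathcal{P}}-d_V)m$ is introduced in the text immediately following Proposition \ref{propdgla} rather than in the proposition itself, but this does not affect your argument.
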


This theorem is a generalizaion of the well-known results concerning algebras over $B a r -$transform of cyclic operads or co-operads, see \cite{kont},
\cite{GJ}. We shall see in the next Section that the space $(\underset{n ,b}{\bigoplus }(V^{ \otimes n} \otimes \mathcal{P} ((n ,b)))^{\mathbb{S}_{n}}$ can be interpreted as the vector space of hamiltonians, generating derivations preserving symplectic structure. Such description of
algebras over arbitrary Feynman transform via derivations preserving symplectic structure can also be compared with the characterisation of the Feynman
transform in the special case of modular completion of the commutative operad studied in \cite{M},
where an interpretation via \emph{higher} order coderivations of the free cocommutative coalgebra was given. 

If
one considers the modular $\mathcal{F}_{\mathcal{D}} \mathcal{P} -$algebra structures over some commutative graded algebra $C$ then they are in one-to-one correspondence with solutions to (\ref{mster}) in the space $(F \otimes C)_{0}$. One can define in the standard way using the algebra $C =k [\varepsilon ]/\varepsilon ^{2}$, the modular homotopy equivalence of the $\mathcal{F}_{\mathcal{D}} \mathcal{P} -$algebra structures. Then it is easy to see that the equivalence classes of the modular $\mathcal{F}_{\mathcal{D}} \mathcal{P} -$algebra structure are in one-to-one correspondence with the gauge equivalence classes of solutions to the quantum master equation (\ref{mster}).

\subsection{\bigskip DGLA of morphisms $\mathcal{F}_{\mathcal{D}} \mathcal{P} \rightarrow \tilde{\mathcal{P}}$.}
One may notice that the definitions of the odd bracket (\ref{brack}) and the odd differential (\ref{delta})
work in fact for an arbitrary pair $(\mathcal{P} ,\tilde{\mathcal{P}})$ where $\mathcal{P}$ is a modular $\mathcal{D} -$operad with arbitrary $\mathcal{D}$ and $\tilde{\mathcal{P}}$ is a modular $\mathcal{D}^{ \vee } -$operad. Then there is a natural differential graded Lie algebra structure on \begin{equation*}\underset{n ,b}{\bigoplus }(\tilde{\mathcal{P}} ((n ,b)) \otimes \mathcal{P} ((n ,b)))^{\mathbb{S}_{n}} [1]
\end{equation*} defined by the components of the vector field (\ref{qphi}). The solutions to the corresponding
Maurer-Cartan equation in $\underset{n ,b}{\bigoplus }(\tilde{\mathcal{P}} ((n ,b)) \otimes \mathcal{P} ((n ,b)))^{\mathbb{S}_{n}} [1]$ are in one-to one correspondence with morphisms of operads $\mathcal{F}_{\mathcal{D}} \mathcal{P} \rightarrow \tilde{\mathcal{P}}$. Notice that this equation coincides with the equation describing the morphisms $\mathcal{F}_{\mathcal{D}^{ \vee }} \tilde{\mathcal{P}} \rightarrow \mathcal{P}$. 

\section{Free $\mathcal{P} -$algebra.\label{freepalg}}
If $\mathcal{P}$ is a cyclic operad then the free $\mathcal{P} -$algebra generated by the graded $k -$vector space $V$ is \begin{equation*}C =\underset{n}{\bigoplus }(V^{ \otimes n} \otimes \mathcal{P} ((n +1)))^{\mathbb{S}_{n}}\text{.}
\end{equation*} It was argued in (\cite{kont},\cite{Ginz})
that the vector space \begin{equation*}F =\underset{n}{\bigoplus }(V^{ \otimes n} \otimes \mathcal{P} ((n)))^{\mathbb{S}_{n}}
\end{equation*} can be considered naturally as the analog of the space of functons on $S p e c (C)$. If $\mathcal{P}$ is a twisted modular operad then the compositions along trees form twisted version of cyclic operad. There
is the corresponding version of the free $\mathcal{P}$ algebra and the arguing can be repeated that $F$ can be seen as the space of functions on $S p e c (C)$ in the twisted case. 

Let $\mathcal{P}$ be a modular $D e t -$operad in the category of graded vector spaces and let $C y c \mathcal{P} = \oplus _{n}\mathcal{P} ((n ,0))$ is the cyclic operad, which is $b =0$ part of $\mathcal{P}$. Then the cobar transformation of $C y c \mathcal{P}$ is related to the Feynman transform as follows: the $b =0$ part of $\Sigma ^{ -1} \mathcal{F}_{D e t} \mathcal{P} =\mathcal{F}_{\mathcal{D}_{\Sigma } D e t} \Sigma  \mathcal{P}$ is equal to $\mathfrak{s} B C y c \mathcal{P}$, that is the suspension of the cobar transformation of $C y c \mathcal{P}$. In the framework of $\mathcal{Q} -$symplectic geometry associated with a cyclic operad $Q$, see \cite{kont} and also \cite{Ginz},
the $B \mathcal{Q} -$algebra structure on a vector space $V$ with symmetric inner product $\beta $ of degree zero is described by a function on affine $\mathcal{Q} -$manifold $\mathfrak{s} V$ of degree $\deg h =1$, such that
\begin{equation}[h ,h] =0 ,\text{\thinspace \thinspace \thinspace }h \in \underset{n}{\bigoplus }((\mathfrak{s} V)^{ \otimes n} \otimes \mathcal{Q} ((n)))^{\mathbb{S}_{n}} \label{clMast}
\end{equation}where the bracket is the Poisson bracket which is associated with the antisymmetric inner product $\mathfrak{s} \beta $. 

Let us put $\tilde{\mathcal{P}} =\Sigma  \mathcal{P}$ and consider the $\mathcal{F}_{\mathcal{D}} \tilde{\mathcal{P}} -$algebra stractures on $\mathfrak{s} V$, where $\mathcal{D} =\mathcal{D}_{\Sigma } D e t$. Then $b =0$ part of such structure is the same as the $B C y c \mathcal{P} -$algebra structure on $V$. The $\mathcal{F}_{\mathcal{D}} \tilde{\mathcal{P}} -$algebra structures on $\mathfrak{s} V$ is described, according to the Theorem from Section \ref{sectFeynm}, by an element $\widehat{h} (z) =\sum _{b \geq 0}h_{b} z^{b}$, $h_{b} \in \underset{n}{\bigoplus }((\mathfrak{s} V)^{ \otimes n} \otimes \mathcal{P} ((n ,b)))^{\mathbb{S}_{n}}$, $\deg  h_{b} =1$, such that\begin{equation*}z  \Delta \widehat{h} (z) +\frac{1}{2}[\widehat{h}(z) ,\widehat{h} (z)] =0.
\end{equation*} 

We see that in the ''classical'' limit $z \rightarrow 0$, $\widehat{h} (z)$ becomes a solution to (\ref{clMast}), the ''classical'' master equation, describing the $B C y c \mathcal{P} -$algebra structure on $V$. The operator $\Delta $ can be seen as the odd second order operator and the bracket is the odd Poisson bracket on $\underset{n ,b}{\bigoplus }((\mathfrak{s} V)^{ \otimes n} \otimes \Sigma  \mathcal{P} ((n ,b)))^{\mathbb{S}_{n}}$ extending the previous bracket from the subspace $b =0$. The whole picture is a noncommutative $\mathcal{P} -$analog of the usual commutative Batalin-Vilkovisky geometry described in \cite{S},\cite{W}.
It would be interesting to study the combinatorial consequences of $\mathcal{P} -$analogs of the Theorems of loc.cit. on invariance of integrals of $\exp  (\widehat{h} (z)/z)$ under deformations. 

\section{Characteristic classes of $\mathcal{F}_{\mathcal{D}} \mathcal{P} -$algebras.}
\bigskip Let $\widehat{m} :\mathcal{F}_{\mathcal{D}} \mathcal{P} \rightarrow $ $\mathcal{E} [V]$ be an $\mathcal{F}_{\mathcal{D}} \mathcal{P} -$algebra structure on the chain complex $V$ with symmetric or antisymmetric inner product $B$ of degree $l$. Here $\mathcal{P}$ is a modular $\mathcal{D} -$operad, such that $\mathcal{D}^{ \vee }$ is the cocycle corresponding to the twisting of $\mathcal{E} [V]$, i.e. $\mathcal{D} =\mathcal{K}^{ \otimes 1 -l}$ for symmetric $B$ and $\mathcal{D} =\mathcal{K}^{ \otimes  -1 -l} \mathcal{L}$ for antisymmetric $B$. It is one of the main application of the formalism developed in \cite{GK}
that the component of the morphism $\widehat{m}$ with $n =0$ \begin{equation*}\widehat{m} ((0 ,b)) :\underset{G \in [\Gamma ((0 ,b))] ,b >1}{\bigoplus }(\mathcal{D}^{ \vee } (G) \otimes \mathcal{P}^{d u a l} ((G)))_{A u t (G)} \rightarrow k
\end{equation*} is a cocycle on the subcomplex of $\mathcal{F}_{\mathcal{D}} \mathcal{P}$ corresponding to graphs with no external legs:
\begin{equation}\widehat{m} ((0 ,b))\vert _{\ensuremath{\operatorname*{Im}} d_{\mathcal{F}}} =0. \label{m0}
\end{equation}More generally if $\widehat{m}_{t} :\mathcal{F}_{\mathcal{D}} \mathcal{P} \rightarrow \mathcal{E} [V] \otimes k [t]$ is the $\mathcal{F}_{\mathcal{D}} \mathcal{P} -$operad structure depending on some graded parameters $t$ then all Taylor cofficient of expansion of the component $\widehat{m}_{t} ((0 ,b))$ at $t =0$ are cocycles on the subcomplex of graphs with no external legs
\begin{equation}\frac{ \partial ^{\vert \alpha \vert }}{ \partial t_{1}^{\alpha _{1}} \ldots   \partial t_{n}^{\alpha _{n}}}\widehat{m}_{t} ((0 ,b))\vert _{t =0 ,\ensuremath{\operatorname*{Im}} d_{\mathcal{F}}} =0 \label{dtm0}
\end{equation}It follows from (\ref{mhatG}) that the value $\widehat{m} ((0 ,b))$ on an element from \begin{equation*}(\mathcal{D}^{ \vee } (G) \otimes \mathcal{P}^{d u a l} ((G)))_{A u t (G)}
\end{equation*} corresponding to the stable graph $G$ is given by the partition function obtained by contracting the product \begin{equation*}\underset{v \in V e r t (G)}{\bigotimes }m_{L e g (v) ,b (v)}
\end{equation*} with $B^{ \otimes E d g e (G)}$. Similarly the cocycle (\ref{dtm0}) is a partition function involving insertions of the derivatives
$\frac{ \partial ^{\vert \beta \vert }}{ \partial t_{1}^{\beta _{1}} \ldots   \partial t_{n}^{\beta _{n}}}m_{L e g (v) ,b (v)}\vert _{t =0}$ with $\beta _{i} \leq \alpha _{i}$ at vertices of $G$ so that for all $1 \leq i \leq n$ the total sum of $\beta _{i}$ for all such insertions in the graph $G$ is equal to $\alpha _{i}$. 

\section{Stable ribbon graphs.}
In the Section below I shall use this construction in order to construct the homology classes in Deligne -Mumford moduli spaces associated with
solutions to the master equation (\ref{mster}) on affine $\mathbb{S} [t] -$manifold, where $\mathbb{S} [t]$ is the modular $D e t -$operad introduced below in Section \ref{detst}. I use the complex of stable ribbon graphs and its
relation with a compactification of moduli spaces of algebraic curves described in \cite{konts3},
see also \cite{Looijenga}. This is a generalization of the equivalence of
''decorated''\ moduli spaces of algebraic curves and moduli spaces of ribbon graphs due to J.Harer,
D.Mumford, R.C.Penner, W.Thurston and others. 

A stable ribbon graph is a connected graph $G$ together with :

\begin{itemize}
\item partitions of the set of flags adjacent to every vertex into $i (v)$ subsets \begin{equation*}L e g (v) =L e g (v)^{(1)} \sqcup \ldots  \sqcup L e g (v)^{(i (v))} ,v \in V e r t (G)
\end{equation*} 

\item fixed cyclic order on every subset $L e g (v)^{(k)}$ , 

\item a map $g :V e r t (G) \rightarrow \mathbb{Z}_{ \geq 0}$ such that for any vertex $2 (2 g (v) +i (v) -2) +n (v) >0$, so that putting $b (v) =2 g (v) +i (v) -1$ defines a stable graph. \end{itemize}

Let us denote via $S R_{(n ,b)}$ the set of all stable ribbon graphs with $n$ exterrior legs and $b (G) =b$. The usual ribbon graphs correspond to the case $b (v) =0$ for all $v \in V e r t (G)$. 

It is easy to see that for graphs from $S R_{(0 ,b)}$ our definition is equivalent to the definition given in \cite{konts3}
via the limit of a certain functor on ribbon graphs. A metric on the stable ribbon graph is a function $l :E d g e (G) \rightarrow \mathbb{R}_{ >0}$. Given a stable ribbon graph $G \in S R_{(0 ,b)}$ and a metric on $G$ one can construct by the standard procedure a punctured Riemann surface $\Sigma  (G)$, which will have double points in general. Namely one should replace every edge by oriented open strip $[0 ,l] \times ]  -i \infty  , +i \infty [$ and glue them for each cyclically ordered subset according to the cyclic order.
In this way one gets several punctured Riemann surfaces and for every vertex of the graph $G$ one should identify the points on these surfaces corresponding to different cyclically ordered subsets associated with the given
vertex of $G$. We also have the nonnegative integer $g (v)$ associated to every singular point of the Riemann surface $\Sigma  (G)$. The one-dimensional CW-complex $\vert G\vert $ is naturally realized as a subset of $\Sigma  (G)$. One can also construct in the similar way the Riemman surface associated with stable ribbon graph $G$ having legs. In such case one gets the singular Riemann surface associated with the graph $G/L e g s (G)$, i.e. the stable ribbon graph $G$ with legs removed, plus the extra structure, which consists of the lines on $\Sigma  (G/L e g s (G))\text{,}$one for each leg, which connect the vertex with the corresponding adjacent puncture, so that $\vert G\vert $ is again naturally realized as a subset of $\Sigma  (G)$. I shall denote in the sequel the set of punctures of the surface $\Sigma  (G)$ via $P_{\Sigma  (G)}$. 

One can consider the moduli space $\bar{\mathcal{M}}_{\gamma  ,\nu }^{c o m b}$ parametrizing the equivalence classes of data $(G ,l)$, where $G$ is a graph from $S R_{(0 ,b)}$ whose associated surface $\Sigma  (G)$ has genus $\gamma $ and exactly $\nu $ punctures numbered from $1$ to $\nu \text{,}$ and $l$ is a metric on $G$. It can be shown, see loc.cit. and \cite{Looijenga},
that there is a natural factor space $\bar{\mathcal{M}}_{\gamma  ,\nu }^{ \prime }$ of the Deligne-Mumford moduli space of stable curves $\bar{\mathcal{M}}_{\gamma  ,\nu }$ so that $\bar{\mathcal{M}}_{\gamma  ,\nu }^{c o m b}$ is homeomorphic to $\bar{\mathcal{M}}_{\gamma  ,\nu }^{ \prime } \times \mathbb{R}_{ >o}^{\nu }$ and the projection to $\mathbb{R}_{ >o}^{\nu }$ corresponds to the map which sends stable graph with metric and numbered punctures to the set of perimeters of edges surrounding
the punctures. In particular the preimage of $p =(p_{1\text{,}} \ldots  ,p_{\nu })$, $p \in $ $\mathbb{R}_{ >o}^{\nu }$ in $\bar{\mathcal{M}}_{\gamma  ,\nu }^{c o m b}$ can be considered as the moduli space $\bar{\mathcal{M}}_{\gamma  ,\nu }^{c o m b} (p)$ of data $(G ,l)$ such that the perimeters around punctures are equal to $p_{1\text{,}} \ldots  ,p_{\nu }$. This moduli space $\bar{\mathcal{M}}_{\gamma  ,\nu }^{c o m b} (p)$ is then homeomorphic to $\bar{\mathcal{M}}_{\gamma  ,\nu }^{ \prime }$. The space $\bar{\mathcal{M}}_{\gamma  ,\nu }^{c o m b} (p)$ has natural structure of a cell complex, or better say, orbi-cell complex, with (orbi-)cells indexed by equivalence classes of stable
graphs $G$ with numbered punctures as above. \\

\section{\bigskip Modular $D e t -$operad $\mathbb{S} [t]$.\label{detst}}
Let us introduce the following modular $D e t -$operad $\mathbb{S} [t]$. Let $k [\mathbb{S}_{n}]^{ \prime }$ denotes the graded $k -$vector space with the basis indexed by elements $(\sigma  ,a_{\sigma })$, where $\sigma  \in \mathbb{S}_{n}$ is a permutation with $i_{\sigma }$ cycles $\sigma _{\alpha }$ and $a_{\sigma } =\sigma _{1} \wedge \ldots  \wedge \sigma _{i_{\sigma }}$, $a_{\sigma } \in D e t (C y c l e (\sigma ))$ is one of the generators of the one-dimensional determinant of the set of cycles of $\sigma $, i.e. $a_{\sigma }$ is an order on the set of cycles defined up to even reordering, and $(\sigma  , -a_{\sigma }) = -(\sigma  ,a_{\sigma })$. The symmetric group $\mathbb{S}_{n}$ acts naturally on $k [\mathbb{S}_{n}]^{ \prime }$ by conjugation. Let $k [t]$ denotes the space of polynomials in one variable $t$, $\deg t = -2$. The cyclic $\mathbb{S} -$module underlying our modular operad is the set of graded $\mathbb{S}_{n} -$modules \begin{equation*}\mathbb{S} [t] ((n)) =k [\mathbb{S}_{n}]^{ \prime } [ -1] \otimes k [t]
\end{equation*} where $\mathbb{S}_{n}$ acts trivially on $k [t]$, and the degree $b$ of the element $(\sigma  ,a_{\sigma })t^{g}$ is defined by \begin{equation*}b =2 g +i_{\sigma } -1.
\end{equation*} In particular $\mathbb{S} [t] ((n ,b))$ is a graded $k -$vector space concentrated in degree $( -b)$. I also put $\mathbb{S} [t] ((n ,0)) =0$ for $n \leq 2$. Notice that for $b =0$ I get the underlying $\mathbb{S} -$module of the cyclic operad $A s s$ of associative algebras with invariant scalar products. 

Compositions in $\mathbb{S} [t]$ are $k [t] -$linear and defined via sewings and dissections of cycles of permutations. The compositions can be easily described using multiplication
on the group of permutations. Let us describe the composition $\mu _{G_{(I ,J ,b ,b^{ \prime })}}^{\mathbb{S} [t]}$ along the simplest graph with two vertices (\ref{phiij}). Let $(\sigma  ,a_{\sigma })t^{g} \in \mathbb{S} [t] ((I \sqcup \{f\} ,b))$, $a_{\sigma } =\sigma _{1} \wedge \ldots  \wedge \sigma _{i_{\sigma }}$, $(\rho  ,a_{\rho })t^{g^{ \prime }} \in \mathbb{S} [t] ((J \sqcup \{f^{ \prime }\} ,b^{ \prime }))$, $a_{\rho } =\rho _{1} \wedge \ldots  \wedge \rho _{i_{\rho }}$ with $f$ belonging to the cycle $\sigma _{k}$ and $f^{ \prime }$ belonging to the cycle $\rho _{l}$. Let us denote by $\pi _{f ,f^{ \prime }}$ \begin{equation*}\pi _{f ,f^{ \prime }} :A u t (\{1 ,\ldots  ,n\} \sqcup \{f ,f^{ \prime }\}) \rightarrow \mathbb{S}_{n}
\end{equation*} the operation erasing the elements $f$ and $f^{ \prime }$ from the cycles of permutation \begin{equation*}\pi _{f ,f^{ \prime }} :(i_{1} \ldots  i_{\alpha } f j_{1} \ldots  j_{\beta } f^{ \prime }) \rightarrow (i_{1} \ldots  i_{\alpha } j_{1} \ldots  j_{\beta })\text{.}
\end{equation*} If $b =b^{ \prime } =0$ then I have simply the cyclic permutations and the composition $\mu _{G_{(I ,J ,b ,b^{ \prime })}}^{\mathbb{S} [t]}$coincides with the composition in the cyclic operad $A s s$, which can be written as $\pi _{f ,f^{ \prime }} \sigma  \rho  (f f^{ \prime })$ where $(f f^{ \prime })$ is the transposition $f \leftrightarrow f^{ \prime }$. For general elements of $\mathbb{S} [t]$ I have the following expression \begin{equation*}\mu _{G_{(I ,J ,b ,b^{ \prime })}}^{\mathbb{S} [t]} =(\pi _{f ,f^{ \prime }} \sigma  \rho  (f f^{ \prime }) ,a_{\mu })t^{g +g^{ \prime }}
\end{equation*} where \begin{equation*}a_{\mu } =( -1)^{k +l} \pi _{f ,f^{ \prime }} (\sigma _{k} \rho _{l} (f f^{ \prime })) \wedge \sigma _{1} \wedge \ldots  \wedge \widehat{\sigma }_{k} \wedge \ldots  \wedge \sigma _{i_{\sigma }} \wedge \rho _{1} \wedge \ldots  \wedge \widehat{\rho }_{l} \wedge \ldots  \wedge \rho _{i_{\rho }}\text{.}
\end{equation*} I leave to an interested reader to verify that the sign in the expression for $a_{\mu }$ follows from the natural isomorphisms
\begin{multline}D e t(C y c l e (\pi _{f ,f^{ \prime }} \sigma  \rho  (f f^{ \prime })) \simeq D e t (\{\pi _{f ,f^{ \prime }} \sigma _{k} \rho _{l} (f f^{ \prime })\}) \otimes  \\
 \otimes D e t (C y c l e (\sigma ) \setminus \{\sigma _{k}\} \sqcup C y c l e (\rho ) \setminus \{\rho _{l}\}) , \\
D e t (C y c l e (\sigma )) [ -1] \simeq D e t (C y c l e (\sigma ) \setminus \{\sigma _{k}\}) , \label{Detcyclepiff} \\
D e t (C y c l e (\rho )) [ -1] \simeq D e t (C y c l e (\rho ) \setminus \{\rho _{l}\}) .\end{multline}

Let us describe the composition along the graph with one loop (\ref{dzetaij})\begin{equation*}\mu _{G_{n ,b}}^{\mathbb{S} [t]} :D e t (H_{1} (G_{n ,b})) \otimes \mathbb{S} [t] ((\{1 ,\ldots  ,n\} \sqcup \{f ,f^{ \prime }\} ,b -1)) \rightarrow \mathbb{S} [t] ((n ,b))\text{.}
\end{equation*} Let $(\sigma  ,a_{\sigma })t^{g} \in \mathbb{S} [t] ((\{1 ,\ldots  ,n\} \sqcup \{f ,f^{ \prime }\} ,b -1))$, $a_{\sigma } =\sigma _{1} \wedge \ldots  \wedge \sigma _{i_{\sigma }}$ and $f \in \sigma _{k}$ and $f^{ \prime } \in \sigma _{l}$ with $k <l$. The pair $f ,f^{ \prime }$ defines the oriented loop $e_{f f^{ \prime }}$ and hence an element $e_{f f^{ \prime }} [1]$ of $D e t (H_{1} (G_{n ,b}))$. Then the composition $\mu _{G_{n ,b}}^{\mathbb{S} [t]}$ on $e_{f f^{ \prime }} [1] \otimes (\sigma  ,a_{\sigma }) t^{g}$ is the sewing of cycles $\sigma _{k}$ and $\sigma _{l}$ times $t$ \begin{equation*}\mu _{G_{n ,b}}^{\mathbb{S} [t]} =(\pi _{f ,f^{ \prime }} \sigma  (f f^{ \prime }) ,a_{\mu ^{ \prime }})t^{g +1}
\end{equation*} where \begin{equation*}a_{\mu ^{ \prime }} =( -1)^{k +l -1} \pi _{f ,f^{ \prime }} (\sigma _{k} \sigma _{l} (f f^{ \prime })) \wedge \sigma _{1} \wedge \ldots  \wedge \widehat{\sigma }_{k} \wedge \ldots  \wedge \widehat{\sigma }_{l} \wedge \ldots  \wedge \sigma _{i_{\sigma }}
\end{equation*} following a sequence of natural isomorphisms analogous to (\ref{Detcyclepiff}).
If the elements $f$ and $f^{ \prime }$ belong to the same cycle $f ,f^{ \prime } \in \sigma _{k}$ then the value of the composition $\mu _{G_{n ,b}}^{\mathbb{S} [t]}$ on $e_{f f^{ \prime }} [1] \otimes (\sigma  ,a_{\sigma }) t^{g}$ is the dissection of the cycle $\sigma _{k}$ into two cycles whose relative order in $D e t (C y c l e)$ is determined by the orientation of the edge $e_{f f^{ \prime }}$: \begin{equation*}\mu _{G_{n ,b}}^{\mathbb{S} [t]} =(\pi _{f ,f^{ \prime }} \sigma  (f f^{ \prime }) ,a_{\mu ^{ \prime  \prime }})t^{g}
\end{equation*} where, if I denote by $\sigma _{k}^{f}$ and $\sigma _{k}^{f^{ \prime }}$ the two cycles of $\sigma _{k} (f f^{ \prime })$ containing $f$ and $f^{ \prime }$ correspondingly, then we have \begin{equation*}a_{\mu ^{ \prime  \prime }} =( -1)^{k -1} (\pi _{f} \sigma _{k}^{f}) \wedge (\pi _{f^{ \prime }} \sigma _{k}^{f^{ \prime }}) \wedge \sigma _{1} \wedge \ldots  \wedge \widehat{\sigma }_{k} \wedge \ldots  \wedge \sigma _{i_{\sigma }}
\end{equation*} which follows from natural isomorphisms analogous to (\ref{Detcyclepiff}). Remark
that if one of the cycles $\sigma _{k}^{f}$, $\sigma _{k}^{f^{ \prime }}$ consist of just one element $f$ or $f^{ \prime }$ correspondingly, which happens precisely when $f$ and $f^{ \prime }$are neighbours in the cycle $\sigma _{k}$, then the composition is zero in such case ($(\pi _{f} \sigma _{k}^{f}) =0$ or $(\pi _{f^{ \prime }} \sigma _{k}^{f^{ \prime }}) =0$). 

One can check that these compositions define on $\mathbb{S} [t]$ the structure of twisted modular $D e t -$operad. Namely, contraction of several edges corresponds, forgetting the elements from $k [t]$ and $D e t (C y c l e)$, to successive operators of multiplications by transpositions followed by erasing operators. But operators corresponding to different
edges commute $[\pi _{f ,f^{ \prime }} ,\pi _{g g^{ \prime }}] =0$, $[\pi _{f ,f^{ \prime }} ,(g g^{ \prime })] =0$. It follows that the composition on the level of permutations is associative with respect to the morphisms of stable graphs. It follows
from commutativity of the diagrams of natural isomorphisms that the rules for compositions of decorations from $k [t]$ and $D e t (C y c l e)$ are also compatible with morphisms of stable graphs. 

\section{Feynman transform of $\mathbb{S} [t]$ and stable ribbon graphs.\label{sectionfeynmnst}}
Let us consider the Feynman transform of $\mathbb{S} [t]$. Notice that $\mathbb{S} [t] ((n))$ has a basis labeled by partitions of $[n]$ into $i$ subsets with cyclic orders on the subsets, plus the nonnegative integer $g$, such that $2 (2 g +i -2) +n >0$ and plus a choice of the ordering of cycles from $D e t (C y c l e)$. It follows immediately from the definition that the Feynman transform $\mathcal{F}_{D e t} \mathbb{S} [t]$ has the basis labeled by the pairs $(G ,\alpha _{G})$, where $G$ is a stable ribbon graph and $\alpha _{G}$ is a choice of orientation from
\begin{equation}\mathcal{K} D e t^{ -1} (G)( \otimes _{v \in V e r t (G)}D e t^{ -1} (C y c l e)[1 -2 g(v)]) . \label{KDetDet}
\end{equation}If $G$ is a stable ribbon graph let $\nu  (G)$ be the number of punctures of the Riemann surface $\Sigma  (G)$ associated with $G$. Then $(G ,\alpha _{G})$ belongs to the subspace $\mathcal{F}_{D e t} \mathbb{S} [t] ((n ,b))$ where $n$ is the number of legs of $G$ and \begin{equation*}b (G) =2 \gamma  (G) -1 +\nu  (G)
\end{equation*} where $\gamma  (G)$ is the genus of $\Sigma  (G)$ taking into account the genus defects associated with vertices of $G$ \begin{equation*}\gamma  (G) =g (N \Sigma  (G)) +\sum _{v \in V e r t (G)}(g (v) +i (v) -1)
\end{equation*} where $g (N \Sigma  (G))$ is the genus of the normalization of $\Sigma  (G)$. It is easy to see that every chain complex $\mathcal{F}_{D e t} \mathbb{S} [t] ((n ,b))$ is in fact the direct sum of complexes\begin{equation*}\mathcal{F}_{D e t} \mathbb{S} [t] ((n ,b)) =\underset{b =2 \gamma  -1 +\nu }{\bigoplus }\mathcal{F}_{D e t} \mathbb{S} [t] ((n ,\gamma  ,\nu )\text{.}
\end{equation*} 

The moduli space $\bar{\mathcal{M}}_{\gamma  ,\nu }^{c o m b} (p)/\mathbb{S}_{\nu }$ has natural (orbi-)cellular decomposition with (orbi-)cells indexed by the isomorphism classes of stable ribbon graphs with $\gamma  =\gamma  (G)$, $\nu  =\nu  (G)$ and $\vert L e g (G)\vert  =0$. Let us show that this cochain cell complex is identified naturally with the complex $\mathcal{F}_{D e t} \mathbb{S} [t] ((0 ,\gamma  ,\nu ))$.

\begin{theorem}
\label{theorem2}$H^{i} (\bar{\mathcal{M}}_{\gamma  ,\nu }^{ \prime }/\mathbb{S}_{\nu }) \simeq H_{ -i}(\mathcal{F}_{D e t} \mathbb{S}[t]((0 ,\gamma  ,\nu )))[2 \gamma  -1]\text{.}$ 
\end{theorem}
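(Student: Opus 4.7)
The plan is to identify the chain complex $\mathcal{F}_{D e t}\mathbb{S}[t]((0,\gamma,\nu))$ with the cellular cochain complex of the orbi-cell decomposition of $\bar{\mathcal{M}}_{\gamma,\nu}^{c o m b}(p)/\mathbb{S}_\nu$, up to the overall degree shift $[2\gamma-1]$, and then invoke the homeomorphism $\bar{\mathcal{M}}_{\gamma,\nu}^{c o m b}(p)/\mathbb{S}_\nu\simeq\bar{\mathcal{M}}_{\gamma,\nu}'/\mathbb{S}_\nu$ recalled in the previous section. By the discussion preceding the statement, both complexes are based on the same combinatorial data, namely isomorphism classes of stable ribbon graphs $G$ with $\gamma(G)=\gamma$, $\nu(G)=\nu$, and $|L e g(G)|=0$. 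The task therefore reduces to three points: (i) matching orientation lines on each basis vector, (ii) matching the differentials, and (iii) checking the degree bookkeeping.

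For (i), the open orbi-cell labelled by $G$ is parametrized by metrics $l\colon E d g e(G)\to\mathbb{R}_{>0}$ subject to the $\nu$ perimeter constraints, so from the short exact sequence $0\to T_{\mathrm{cell}}\to\mathbb{R}^{E d g e(G)}\to\mathbb{R}^{P_{\Sigma(G)}}\to 0$ its orientation line is canonically (up to shift) $D e t(E d g e(G))\otimes D e t^{-1}(P_{\Sigma(G)})$. On the Feynman-transform side, formula (\ref{KDetDet}) prescribes $\mathcal{K}(G)\otimes D e t^{-1}(G)\otimes\bigotimes_{v}D e t^{-1}(C y c l e(v))[1-2g(v)]$. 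The key point is then to establish the canonical isomorphism
\begin{equation*}
D e t(P_{\Sigma(G)}) \simeq D e t(H_1(G))\otimes\bigotimes_{v\in V e r t(G)}D e t(C y c l e(v))\text{.}
\end{equation*}
For ordinary ribbon graphs ($g(v)=0$ and $i(v)=1$ at every vertex) this follows from the long exact sequence of the pair $(\Sigma(G),|G|)$, namely $0\to\mathbb{Z}\to\mathbb{Z}^{P_{\Sigma}}\to H_1(G)\to H_1(\Sigma(G))\to 0$, combined with the canonical trivialization of $D e t(H_1(\Sigma(G)))$ by $\omega^\gamma/\gamma!$ induced by the symplectic intersection form on the oriented surface. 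For general stable ribbon graphs, one resolves each vertex into its $i(v)$ petal components, producing the factor $D e t(C y c l e(v))$ at each vertex, and then fills in handles of genus $g(v)$, producing the shift $[1-2g(v)]$ from the analogous local symplectic trivialization on the vertex surface.

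For (ii), on the subcomplex with $n=0$ the Feynman differential $d_{\mathcal{F}}=\partial_{\mathbb{S}[t]^{d u a l}}+\partial_\mu$ reduces essentially to $\partial_\mu$, which sums over stable ribbon graphs $\widetilde{G}$ with $\widetilde{G}/\{e\}\simeq G$, each term weighted by $e[1]\in D e t(\{e\})$ paired against the dual of the composition $\mu^{\mathbb{S}[t]}_{\widetilde{G}\to G}$. Under the identification from (i) this sum matches the cellular coboundary of the cell of $G$: the two families of $\mathbb{S}[t]$-compositions described in Section~\ref{detst} (sewing two cycles coming from different vertices, and dissection or self-sewing of a cycle at a single vertex) correspond precisely to the two types of codimension-one boundary strata of the cell of $G$ in $\bar{\mathcal{M}}_{\gamma,\nu}^{c o m b}(p)$. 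The signs induced by the $D e t(C y c l e)$-decorations of $\mathbb{S}[t]$ are exactly those needed to match the cellular boundary signs under the orientation-line isomorphism of (i).

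For (iii), a basis vector labelled by $G$ in $\mathcal{F}_{D e t}\mathbb{S}[t]((0,\gamma,\nu))$ sits in homological degree $-|E d g e(G)|+b(G)=-|E d g e(G)|+2\gamma+\nu-1$, while the cell of $G$ has dimension $|E d g e(G)|-\nu$; setting the cell dimension equal to $i$ gives homological degree $-i+2\gamma-1$, which produces exactly the shift $[2\gamma-1]$ in the statement. The main obstacle is step (i): the canonical orientation-line isomorphism, together with all signs, must be verified uniformly for the most singular strata of $\bar{\mathcal{M}}_{\gamma,\nu}$, where the local symplectic trivialization at each vertex with positive genus defect $g(v)$ and with several petals $i(v)>1$ has to be combined correctly with the global long exact sequence of the pair $(\Sigma(G),|G|)$.
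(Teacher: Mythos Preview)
Your proposal is correct and follows the same overall strategy as the paper's proof: identify cells with stable ribbon graphs, match orientation lines, match differentials, and check the degree shift. The one variation worth noting is in how the orientation-line identity (your step (i)) is established: the paper constructs a resolved surface $\tilde{\Sigma}(G)$ by gluing at each vertex $v$ a genus-$g(v)$ piece $\Sigma(v)$ with $i(v)$ boundary circles, observes that $(\tilde{\Sigma}(G),\sqcup_v\Sigma(v))$ is homotopy equivalent to $(|G|,Vert(G))$, and then obtains the determinant identity in one stroke from the long exact sequence of this pair combined with Poincar\'e duality on $\tilde{\Sigma}(G)$ and on each $\Sigma(v)$; your route instead treats the ordinary ribbon-graph case first via the pair $(\bar{\Sigma},|G|)$ and the symplectic trivialization of $H_1$, and then grafts on the vertex corrections. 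The two are equivalent in content, but the paper's packaging handles all vertices uniformly and so sidesteps the case analysis you flag as the main obstacle. One minor slip: in your exact sequence $0\to\mathbb{Z}\to\mathbb{Z}^{P_\Sigma}\to H_1(G)\to H_1(\Sigma(G))\to 0$ the last term should be $H_1$ of the \emph{closed} surface $\bar{\Sigma}$, not the punctured $\Sigma(G)$, since that is where the symplectic intersection form and its trivialization $\omega^\gamma/\gamma!$ live.
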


\begin{proof}
It is easy to see from the definition of the moduli space $\bar{\mathcal{M}}_{\gamma  ,\nu }^{c o m b} (p)$ that the (orbi-)cells, lying in the boundary of the orbi-cell of the equivalence class of stable ribbon graph $G$, correspond to the equivalence classes of the stable ribbon graphs of the form $G/\{e\}$, $e \in E d g e (G)$, see \cite{Looijenga}, \cite{Z}.
It remains to check that the coorientaition of the orbi-cell is naturally identified with an element from (\ref{KDetDet}),
so that the differential on $\mathcal{F}_{D e t} \mathbb{S} [t] ((0 ,\gamma  ,\nu ))$ coincides with the differential of the cochain cell complex of $\bar{\mathcal{M}}_{\gamma  ,\nu }^{ \prime }/\mathbb{S}_{\nu }$. This identification is analogous to the Proposition 9.5 of \cite{GK}.
Firstly, for every stable ribbon graph $G$ the vector space $\mathcal{K} (G)$ is naturally isomorphic to $H_{c}^{t o p} (C_{G})$ where $C_{G}$ is the (orbi-)cell corresponding to $G$. To prove that the complex $\mathcal{F}_{D e t} \mathbb{S} [t] ((0 ,\gamma  ,\nu )) [1 -2 \gamma ]$ computes indeed the cohomology $ \oplus _{i}H^{i} (\bar{\mathcal{M}}_{\gamma  ,\nu }^{ \prime }/\mathbb{S}_{\nu })$ it is sufficient to identify the inverse to the sheaf of cohomology of the fibers of projection $\bar{\mathcal{M}}_{\gamma  ,\nu }^{c o m b}/\mathbb{S}_{\nu } \rightarrow \bar{\mathcal{M}}_{\gamma  ,\nu }^{ \prime }/\mathbb{S}_{\nu }$, i.e it is sufficient to show that
\begin{equation}D e t (G)^{ -1} \underset{v \in V e r t (G)}{\bigotimes }D e t^{ -1}(C y c l e (\sigma  (v))[1 -2 g(v)] \simeq D e t^{ -1} (P_{\Sigma  (G)}) [1 -2 \gamma ] . \label{DetSigmatilda}
\end{equation}In order to prove this let us consider the surface $\tilde{\Sigma } (G)$ which is obtained topologically as follows. Let us remove a small neighborhoud of every singular point $v$ of $\Sigma  (G)$. If $v$ has the genus defect $g (v)$ and there are $i (v)$ branches that are meeting at $v$, then let us glue instead of this neighborhoud a curve $\Sigma  (v)$ of the genus $g (v)$ and with $i (v)$ boundary components. I obtain topologically a curve of genus $\gamma $ without $\nu $ marked points. Then
\begin{equation}D e t (H^{ \ast } (\tilde{\Sigma } (G) , \sqcup _{v \in V e r t (G)}\Sigma  (v))) \simeq D e t(H^{ \ast } (\vert G\vert  ,V e r t (G)) \simeq D e t (G)^{ -1}[1 -\vert  V e r t(G)\vert ] \label{detssigma}
\end{equation}since $(\tilde{\Sigma } (G) , \sqcup _{v \in V e r t (G)}\Sigma  (v))$ is homotopic to $(\vert G\vert  ,V e r t (G))$. Using the Poincare duality for the compact surface $\tilde{\Sigma } (G) \sqcup P_{\Sigma  (G)}$, where recall that $P_{\Sigma  (G)}$ denotes the set of marked points, and the Mayer-Vietoris sequence as in loc.cit. I get \begin{equation*}D e t (H^{ \ast } (\tilde{\Sigma } (G))) \simeq D e t^{ -1} (P_{\Sigma  (G)}) [2 -2 \gamma ]\text{.}
\end{equation*} Also, by similar arguments \begin{equation*}D e t (H^{ \ast } (\Sigma  (v))) \simeq D e t^{ -1}(C y c l e (\sigma  (v))[2 -2 g(v)]\text{.}
\end{equation*} Now the equality (\ref{DetSigmatilda}) follows from (\ref{detssigma})
and the exact sequence associated with the pair $(\tilde{\Sigma } (G) , \sqcup _{v \in V e r t (G)}\Sigma  (v))$. 
\end{proof}

\section{Algebras over Feynman transform of $\mathbb{S} [t]$.}

\subsection{\bigskip Coboundary $\mathcal{D}_{\protect \chi _{l}}$.}
The operad $\mathcal{F}_{D e t} \mathbb{S} [t]$ is a twisted modular $\mathcal{K} D e t^{ -1} -$operad. To consider algebras over $\mathcal{F}_{D e t} \mathbb{S} [t]$ I should apply a coboundary so that the twisting becomes compatible with corresponding twisting of $\mathcal{E} [V]$. Notice that \begin{equation*}\mathcal{K} D e t^{ -1} \simeq \mathcal{K}^{2 l} \mathcal{D}_{\chi _{l}}
\end{equation*} where $\chi _{l} =\tilde{\mathfrak{s}} \Sigma  \beta ^{2 (1 -l)}$. Let us put $\mathbb{S}_{\chi _{l}} [t] =\chi _{l} \mathbb{S} [t]$, so that $\mathbb{S}_{\chi _{l}} [t]$ is a twisted $\mathcal{K}^{1 -2 l}$-operad. This shifts the grading on our operad and also the $\mathbb{S}_{n} -$action is changed by the multiplication by the alternating representation: \begin{equation*}\mathbb{S}_{\chi _{l}} [t] ((n ,b)) =\mathbb{S} [t] ((n ,b)) \otimes s g n_{n}[n +( 2 l -2)(1 -b) -1]\text{.}
\end{equation*} Then the Feynman transform \begin{equation*}\mathcal{F}_{\mathcal{K}^{1 -2 l}} \mathbb{S}_{\chi _{l}} [t] =\chi _{l}^{ -1} \mathcal{F}_{D e t} \mathbb{S} [t]
\end{equation*} is the twisted modular $\mathcal{K}^{2 l}$-operad. In particular the twisting of the Feynman transform $\mathcal{F}_{\mathcal{K}} \mathbb{S}_{\chi _{0}} [t]$ is trivial and $\mathcal{F}_{\mathcal{K}} \mathbb{S}_{\chi _{0}} [t]$ is a usual modular operad. As the complex of vector spaces the Feynman transform $\mathcal{F}_{\mathcal{K}^{1 -2 l}} \mathbb{S}_{\chi _{l}} [t]$ is isomorphic to $\mathcal{F}_{D e t} \mathbb{S} [t]$ up to a shift of degrees \begin{equation*}\mathcal{F}_{\mathcal{K}^{1 -2 l}} \mathbb{S}_{\chi _{l}} [t] ((n ,b)) \simeq \mathcal{F}_{D e t} \mathbb{S} [t] ((n ,b))[1 -n +( 2 l -2)(b -1)]
\end{equation*} so that the characteristic classes of $\mathcal{F}_{\mathcal{K}^{1 -2 l}} \mathbb{S}_{\chi _{l}} [t] -$algebras take the value also in $ \oplus _{i}H_{i} (\bar{\mathcal{M}}_{\gamma  ,\nu }^{ \prime }/\mathbb{S}_{\nu })$. By the Theorem \ref{theorem1} from the section \ref{sectFeynm}
$\mathcal{F}_{\mathcal{K}^{1 -2 l}} \mathbb{S}_{\chi _{l}} [t] -$algebra on vector space $V$ with inner product $B$, is defined by the set of elements \begin{equation*}m_{n} \in \underset{n}{\bigoplus }(V^{ \otimes n} \otimes \mathbb{S}_{\chi _{l}}[t] ((n)))_{0}^{\mathbb{S}_{n}}
\end{equation*} satisfying the quantum master equation (\ref{mster}). If $V$ has a basis $\{e_{\alpha }\}_{\alpha  \in \Xi }$ then I can write
\begin{gather*}m_{n} =\sum _{I \subset \Xi  ,\vert I\vert  =n}\sum _{\substack{\sigma  \in A u t (I) ,g \\ 4 (g -1) +2 i_{\sigma } +n >0}}m_{\sigma  ,g} t^{g}\text{\ \ } \\
m_{\sigma  ,g} \in  \wedge _{i =1}^{i =i_{\sigma }}(V^{ \otimes \lambda _{i}})^{\mathbb{Z}/\lambda _{i} \mathbb{Z}}\; ,\;\deg  m_{\sigma  ,g} =(2 l -3) (2 -2 g -i_{\sigma }) +n\end{gather*}where $\sigma $ has $i_{\sigma }$ cycles of lenghts $\lambda _{1} ,\ldots  ,\lambda _{i_{\sigma }}$.

\begin{proposition}
The $\mathcal{F}_{\mathcal{K}^{1 -2 l}} \mathbb{S}_{\chi _{l}} [t] -$algebra structure on $V$ is defined by the set of tensors $\{m_{\sigma  ,g}\}$ satisfying the quantum master equation for functions on the affine $\mathbb{S}_{\chi _{l}} [t] -$manifold $V$. The partition function constructed from the tensors $m_{\sigma  ,g}$ defines the characteristic class with value in $ \oplus _{i}H_{i} (\bar{\mathcal{M}}_{\gamma  ,\nu }^{ \prime }/\mathbb{S}_{\nu })$. 
\end{proposition}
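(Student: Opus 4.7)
The proposition combines three earlier results: Theorem \ref{theorem1}, the explicit combinatorial description of $\mathbb{S}[t]$ from Section \ref{detst}, and Theorem \ref{theorem2} together with the general characteristic class construction of Section 7. Since $\mathbb{S}_{\chi_l}[t] = \chi_l \mathbb{S}[t]$ is a twisted $\mathcal{K}^{\otimes 1-2l}$-operad by construction, Theorem \ref{theorem1} applies directly and identifies $\mathcal{F}_{\mathcal{K}^{1-2l}}\mathbb{S}_{\chi_l}[t]$-algebra structures on $V$ with solutions of the quantum master equation (\ref{mster}) in the space $\bigoplus_{n,b}(V^{\otimes n}\otimes \mathbb{S}_{\chi_l}[t]((n,b)))^{\mathbb{S}_n}_0$.

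The next step is to unpack this space of $\mathbb{S}_n$-invariants using the definition $\mathbb{S}[t]((n)) = k[\mathbb{S}_n]'[-1]\otimes k[t]$. A $k$-basis of $\mathbb{S}[t]((n,b))$ consists of pairs $(\sigma,a_\sigma)t^g$ with $2g+i_\sigma-1=b$, and the $\mathbb{S}_n$-orbits on these are classified by the cycle type $\lambda_1,\dots,\lambda_{i_\sigma}$. The stabilizer of a representative is generated by cyclic rotations inside each cycle together with permutations among equal-length cycles, so that invariants inside $V^{\otimes n}$ give factors $(V^{\otimes \lambda_i})^{\mathbb{Z}/\lambda_i\mathbb{Z}}$; combined with the ordering of cycles encoded by $a_\sigma$ and the alternating twist $sgn_n$ contained in $\chi_l$, these factors assemble into the wedge product $\wedge_{i=1}^{i_\sigma}(V^{\otimes \lambda_i})^{\mathbb{Z}/\lambda_i\mathbb{Z}}$. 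Accumulating the degree shifts $[-b]$ from $\mathbb{S}[t]((n,b))$, $[-1]$ from $k[\mathbb{S}_n]'[-1]$, and $[n+(2l-2)(1-b)-1]$ from $\chi_l$ yields the asserted formula $\deg m_{\sigma,g}=(2l-3)(2-2g-i_\sigma)+n$, so that the master equation is indeed the one governing the set of tensors $\{m_{\sigma,g}\}$.

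For the second assertion I would apply the construction of Section 7 to the morphism $\widehat{m}:\mathcal{F}_{\mathcal{K}^{1-2l}}\mathbb{S}_{\chi_l}[t]\to \mathcal{E}[V]$. Its $n=0$ component takes values in the ground field, on which $d_{\mathcal{E}[V]}$ acts trivially, so (\ref{m0}) shows that $\widehat{m}((0,b))$ is a cocycle on the $(n=0)$-part of the Feynman transform. The degree-shift isomorphism $\mathcal{F}_{\mathcal{K}^{1-2l}}\mathbb{S}_{\chi_l}[t]((0,b))\simeq \mathcal{F}_{Det}\mathbb{S}[t]((0,b))[1+(2l-2)(b-1)]$ combined with Theorem \ref{theorem2} then identifies this cocycle with a homology class in $\bigoplus_i H_i(\bar{\mathcal{M}}'_{\gamma,\nu}/\mathbb{S}_\nu)$, whose value on the orbi-cell corresponding to a stable ribbon graph $G$ is the partition function obtained by contracting $\bigotimes_{v\in Vert(G)}m_{\sigma(v),g(v)}$ with $B^{\otimes Edge(G)}$, exactly as described at the end of Section 7.

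The only delicate point is the sign bookkeeping: one must check that the alternating and cyclic symmetries of $m_{\sigma,g}$ produced by the $\chi_l$-twist match the orbi-cellular coorientation established in the proof of Theorem \ref{theorem2}. I expect this to be routine given the coboundary identities listed at the end of Section 3 together with the natural isomorphisms (\ref{Detcyclepiff}) and (\ref{DetSigmatilda}); no genuinely new argument is needed beyond careful tracking of degree shifts and sign twists.
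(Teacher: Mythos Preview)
Your proposal is correct and follows exactly the route the paper intends: the proposition is stated in the paper without proof, as an immediate corollary of Theorem~\ref{theorem1} (giving the quantum master equation description), the explicit basis of $\mathbb{S}[t]$ from Section~\ref{detst} (giving the decomposition into tensors $m_{\sigma,g}$), and Theorem~\ref{theorem2} combined with the Section~7 cocycle construction (giving the characteristic class). Your unpacking of the $\mathbb{S}_n$-invariants and the degree count are right, though note a small redundancy in your bookkeeping: the shift $[-1]$ from $k[\mathbb{S}_n]'[-1]$ is already absorbed in the statement that $\mathbb{S}[t]((n,b))$ sits in degree $-b$, so it should not be listed separately alongside $[-b]$; the final degree formula you obtain is nonetheless correct.
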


Notice that for an $\mathcal{F}_{\mathcal{K}^{1 -2 l}} \mathbb{S}_{\chi _{l}} [t] -$algebra the elements $m_{\sigma  ,g}$ with $i_{\sigma } =1$, $g =0$ define an $A_{\infty } -$algebra with inner product of degree $2 l$. The characteristic class map for the $A_{\infty } -$algebras was described in (\cite{Konts2}).
It takes values in the homologies of the uncompactified moduli spaces of curves. We see that the $\mathcal{F}_{\mathcal{K}^{1 -2 l}} \mathbb{S}_{\chi _{l}} [t] -$algebra is the algebraic structure whose operadic characteristic class map extends the $A_{\infty } -$algebra's map to the homologies of the compactification $\bar{\mathcal{M}}_{\gamma  ,\nu }^{ \prime }$. 

\subsection{\bigskip \ Modular $\mathcal{K}^{2 l} -$operads $\mathcal{F}_{\mathcal{K}^{1 -2 l}} \mathbb{S}_{\protect \chi _{l}} [t]$ and higher genus GW-invariants.}
The twisted modular $\mathcal{K}^{2 l} -$operad $\mathcal{F}_{\mathcal{K}^{1 -2 l}} \mathbb{S}_{\chi _{l}} [t]$ arises naturally in the counting of holomorphic curves of arbitrary genus with boundaries in the set of Lagrangian submanifolds of
a simplectic manifold of dimension $4 l$. Namely, let us consider a set of graded Lagrangian submanifolds, intersecting transversally, and let $\sigma $ be a permutation acting on the set of intersection points of these submanifolds, which I also mark by $\{1 ,\ldots  ,n\}$. Consider the total number of holomorphic maps of the surfaces of genus $g$ with $i_{\sigma }$ boundary components along with set of points lying on these boundaries, such that the components of the boundaries are mapped to the
Lagrangian submanifolds, and the points are mapped to the intersection points, so that the cyclic orders on the intersection points corresponding to every
boundary component form the permutation $\sigma $. These numbers define naturally the set of elements $\{m_{\sigma  ,g}\}$ from $\underset{n}{\bigoplus }(V^{ \otimes n} \otimes \mathbb{S}_{\chi _{l}}[t] ((n ,2 g +i -1)))^{\mathbb{S}_{n}}$ where $V$ is the graded vector spaces with the basis labeled by points of intersection of Lagrangian submanifolds. Then the standard arguments,
using the degeneration of one-parameter families of such maps, show that this set of elements satisfy the quantum master equation (\ref{mster})
of the noncommutative Batalin-Vilkovisky geometry on the affine $\mathbb{S}_{\chi _{l}} [t] -$manifold. Similarly, there exists the odd version, corresponding to the case of symplectic manifold of dimension $4 l +2$. It corresponds to the modular operad of $\mathbb{S}_{n} -$modules $\tilde{\mathbb{S}} [t] ((n)) =k [\mathbb{S}_{n}] \otimes k [t]$ with compositions defined as for $\mathbb{S} [t]$, by simply omitting the terms involvinvg $D e t (C y c l e (\sigma ))$. Then the complex underlying the Feynman transform $\mathcal{F} \tilde{\mathbb{S}} [t]$ is identified with cochain complex of $\bar{\mathcal{M}}_{\gamma  ,\nu }^{ \prime }/\mathbb{S}_{\nu }$ with coefficients in the local system $D e t (P_{\Sigma })$ . The counting of holomorphic curves with boundaries in Lagrangian submanifolds of a symplectic manifold of dimension $4 l +2$ defines an algebra over $\mathcal{F}_{\mathcal{K}^{ -2 l}} \beta ^{ -2 l} \tilde{\mathbb{S}} [t]$. One can show that this leads to a combinatorial description of Gromov-Witten invariants via the characteristic class map and the periodic
cyclic homology of the twisted modular operads along the lines of \cite{Bar1},\cite{BK}.

\subsection{\bigskip The map $\mathcal{F} \mathcal{A} s s \rightarrow \mathbb{S}_{\protect \chi _{0}} [t]$}
There is a close interplay between the twisted modular operad $\mathbb{S} [t]$ and the Feynman transform of the cyclic operad $\mathcal{A} s s$ which I would like to illustrate in this subsection. Forgetting the differential the Feynman transform $\mathcal{F} \mathcal{A} s s$ of the cyclic operad $\mathcal{A} s s$ is a $\mathcal{K} -$operad generated by the $\mathbb{S} -$module $\mathcal{A} s s^{d u a l}$. It follows that $\mathcal{F} \mathcal{A} s s$ has a basis labeled by ribbon graphs with a choice of a generator of the one-dimensional vector space $\mathcal{K} (G)$. The complex $\mathcal{F} \mathcal{A} s s ((n ,b))$ is decomposed as the sum of subcomplexes $\mathcal{F} \mathcal{A} s s ((n ,\gamma  ,\nu ))$ according to the genus $\gamma $ and the number of punctures $\nu $ of the Riemann surface associated with the ribbon graph, see \cite{GK},
proposition 9.2. Recall that $D e t (G) \simeq \mathcal{K} \mathcal{D}_{\chi _{0}}^{ -1}$ where $\mathcal{D}_{\chi _{0}}$ is the coboundary associated with the $\mathbb{S} -$module $\chi _{0} =\mathfrak{s} \Sigma $. It follows that $\chi _{0}^{ -1} \mathcal{F} \mathcal{A} s s$ is a modular $D e t -$operad which as a $k -$vector space consist of linear combinations of elements $(G ,\widehat{\alpha }_{G})$ where $G$ is a ribbon graph and $\widehat{\alpha }_{G}$ is an element of the one-dimensional vector space $D e t (G) \otimes (  \otimes _{v \in V e r t (G)}D e t(L e g (v)[ -3])$. The subset of legs of ribbon graph $G$ adjacent to a given puncture has natural cyclic order. It follows that every ribbon graph $G$ defines naturally a permutation $\sigma _{G}$ on the set $L e g (G)$. Notice that for graphs with at least one leg adjacent to every puncture I have
\begin{equation}D e t (G) \simeq D e t (C y c l e (\sigma _{G})) [2 \gamma  -1] \label{alphaG}
\end{equation}see loc.cit, page 117. Let $G$ be a trivalent ribbon graph. The cyclic order on $L e g (v)$ gives a canonical element in $D e t (L e g (v)) [ -3]$ for every vertex $v$. This is the element $e_{1} \wedge e_{2} \wedge e_{3}$ if $e_{1} \rightarrow e_{2} \rightarrow e_{3} \rightarrow e_{1}$ denotes the cyclic order on the three flags. Let $\widehat{\alpha }_{G}^{c a n}$ denotes the product of an element $\alpha _{G} \in D e t (G)$ with the tensor product of the canonical elements in $ \otimes _{v \in V e r t (G)}D e t (L e g (v)) [ -3]$. Let $\alpha _{\sigma _{G}}$ denotes the element from $D e t (C y c l e (\sigma _{G})) [2 \gamma  -1]$ corresponding to $\alpha _{G}$ under the isomorphism (\ref{alphaG}). I state here the following result, the proof is a simple
check.

\begin{proposition}
Let $G$ be a trivalent ribbon graph such that for every puncture of $G$ there is a leg of $G$ adjacent to this puncture. Let us put $\phi  (G ,\widehat{\alpha }_{G}^{c a n}) =$ $(\sigma _{G} ,\alpha _{\sigma _{G}})t^{\gamma }$ for such graph and $\phi  (G ,\widehat{\alpha }_{G}) =0$ for all other ribbon graphs. Then $\phi $ defines a morphism of twisted modular $D e t -$operads $\chi _{0}^{ -1} \mathcal{F} \mathcal{A} s s \rightarrow \mathbb{S} [t]$. 
\end{proposition}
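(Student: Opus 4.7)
The plan is to verify that $\phi$ extends to a morphism of modular $Det$-operads $\chi_0^{-1}\mathcal{F}\mathcal{A}ss \to \mathbb{S}[t]$ by checking three compatibilities: well-definedness on $Aut(G)$-coinvariants, $\mathbb{S}_n$-equivariance, and compatibility with the composition maps $\mu_G^{\mathcal{P}}$. The first two are formal: $\sigma_G$ is defined intrinsically from the ribbon structure, so it behaves functorially both under graph automorphisms and under relabelings of the exterior legs, and the isomorphism $(\ref{alphaG})$ translating $\alpha_G \in Det(H_1(G))$ to $\alpha_{\sigma_G} \in Det(Cycle(\sigma_G))[2\gamma - 1]$ is natural in this sense.

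The substantive content is compatibility with compositions. By the associativity axioms it suffices to verify the two basic one-edge stable graphs. For the two-vertex graph $G_{(I_1, I_2, 0, 0)}$, grafting trivalent ribbon graphs $G_1, G_2$ at legs $f \in G_1,\ f' \in G_2$ produces a trivalent ribbon graph, and tracing the face structure across the new edge yields the identity
\begin{equation*}
\sigma_{G_1 \cup G_2} \;=\; \pi_{f, f'}\bigl(\sigma_{G_1}\, \sigma_{G_2}\, (f f')\bigr),
\end{equation*}
which is precisely the formula defining $\mu_{G_{(I_1, I_2, 0, 0)}}^{\mathbb{S}[t]}$. For the self-loop graph $G_{n, b}$, if $f$ and $f'$ lie in distinct cycles of $\sigma_G$ (distinct faces of $G$) the glueing sews the cycles and the genus of the associated surface increases by one, producing the factor of $t$; if $f$ and $f'$ lie in the same cycle, the cycle dissects into two with genus unchanged; the degenerate case where $f, f'$ are adjacent in their cycle produces a face with no adjacent leg, which lands in the subspace where $\phi$ vanishes, matching the vanishing terms $(\pi_f \sigma_k^f) = 0$ in the $\mathbb{S}[t]$ formula. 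In each case the power of $t$ and the form of the target permutation reproduce the $\mathbb{S}[t]$ composition formulae verbatim, and the admissibility condition (every puncture adjacent to a leg) is stable under these operations, so the definition "$\phi = 0$ on other graphs" is consistent with composition.

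The main obstacle is the sign bookkeeping on the $Det(Cycle)$ factors: at each composition the signs $(-1)^{k + l}$ in the $\mathbb{S}[t]$ formulae must be reproduced via the isomorphism $(\ref{alphaG})$ together with the sign-determining natural isomorphisms $(\ref{Detcyclepiff})$ and their loop analogues. I would handle this by chasing the Mayer--Vietoris identification used in the proof of Theorem~$\ref{theorem2}$: identifying
\begin{equation*}
Det(G)^{-1} \otimes \bigotimes_{v \in Vert(G)} Det^{-1}(Cycle(\sigma(v)))[1 - 2 g(v)] \;\simeq\; Det^{-1}(P_{\Sigma(G)})[1 - 2\gamma],
\end{equation*}
and verifying that this identification is natural under the graph operations of grafting and self-glueing; the signs $(-1)^{k+l}$ and $(-1)^{k+l-1}$ then appear precisely because of the positions of the cycles $\sigma_k, \rho_l$ (respectively $\sigma_k, \sigma_l$) in the wedge products $a_\sigma, a_\rho$, which is what the natural isomorphisms in $(\ref{Detcyclepiff})$ record. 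The degree shifts supplied by the coboundary $\chi_0 = \mathfrak{s}\Sigma$, together with the convention $\deg t = -2$ and the shift $[-1]$ in the definition of $\mathbb{S}[t]((n)) = k[\mathbb{S}_n]'[-1] \otimes k[t]$, guarantee that the $Det$-twists and the gradings agree on both sides.
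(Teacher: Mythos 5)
The paper offers no argument here beyond the remark that ``the proof is a simple check,'' so there is no written proof to compare yours against; judged on its own terms, your proposal has one genuine gap. Everything you verify concerns the compatibility of $\phi$ with the operadic compositions, and that part is essentially correct --- indeed it is close to automatic, because once the differential is forgotten $\mathcal{F}\mathcal{A}ss$ is the \emph{free} twisted modular operad on the corollas $\mathcal{A}ss^{dual}((n))$, so a morphism out of it is determined by its values on corollas (nonzero only for $n=3$), and your face-tracing identity $\sigma_{G_1\cup G_2}=\pi_{f,f'}(\sigma_{G_1}\sigma_{G_2}(ff'))$ together with the orientation bookkeeping only confirms that the free extension of this assignment is the stated closed formula $(\sigma_G,\alpha_{\sigma_G})t^{\gamma}$. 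What you never address is that a morphism of twisted modular operads must also intertwine the differentials --- this is exactly condition (\ref{dmmd}) in the paper --- and since $\mathbb{S}[t]$ carries the zero differential this amounts to the nontrivial identity $\phi\circ d_{\mathcal{F}}=0$.

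This is the one place where the definition ``$\phi=0$ on non-trivalent graphs'' could actually fail, and it does not follow from composition-compatibility. The differential $d_{\mathcal{F}}$ expands vertices (adds edges), so applied to a ribbon graph with exactly one $4$-valent vertex, cyclically ordered $(1234)$, and all other vertices trivalent, it produces the two ribbon-compatible trivalent resolutions $(12h)(h'34)$ and $(23h)(h'41)$. These are in general non-isomorphic trivalent graphs, yet they have the same face permutation $\sigma$, the same genus and the same number of punctures, so $\phi$ sends both to the same element $(\sigma,\,\cdot\,)t^{\gamma}$ of $\mathbb{S}[t]$; you must therefore show the two terms occur with opposite orientations. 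They do, and the sign comes precisely from the factor $\bigotimes_{v}Det(Leg(v))[-3]$ in $\widehat{\alpha}_G$: comparing the canonical elements through the natural map to $Det(\{1,2,3,4\})\otimes\Lambda^{2}(k^{\{h,h'\}})$ one finds
\begin{equation*}
(1\wedge2\wedge h)\otimes(h'\wedge3\wedge4)\mapsto 1\wedge2\wedge3\wedge4\otimes(h\wedge h'),\qquad
(2\wedge3\wedge h)\otimes(h'\wedge4\wedge1)\mapsto -\,1\wedge2\wedge3\wedge4\otimes(h\wedge h'),
\end{equation*}
while the $Det(H_1)$ and edge-orientation data agree, so the two contributions cancel. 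Without this verification your $\phi$ is only a morphism of the underlying graded operads, not of the differential graded ones, and the application (pulling back algebra structures, characteristic classes) would not go through. A smaller point: stability of the admissibility condition under gluing needs the caveat that the two merged faces are not both adjacent only to the glued legs $f,f'$; in that degenerate case the composite has a leg-free puncture and you must check that the corresponding $\mathbb{S}[t]$-composition also vanishes, as in the loop case $(\pi_f\sigma_k^f)=0$.
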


\end{document}